\pdfoutput=1
\documentclass[journal,onecolumn,12pt,twoside]{IEEEtranTCOM}
\usepackage{etex}

\normalsize

%
\usepackage{cite}
\usepackage{setspace}

%
\usepackage{graphicx}
\usepackage{color}
\usepackage{pst-sigsys}
\ifCLASSINFOpdf
\else
\fi

%
%

%
\usepackage[cmex10]{amsmath}
\usepackage{amsmath,dsfont}
\usepackage{verbatim}
\usepackage{amssymb}
\usepackage{amsthm}
\usepackage{nccmath}
\usepackage{enumerate}
  \usepackage{pgfplots}
  \usepackage[normalem]{ulem}
  \usepackage{cases}
  \usepackage{bbm}  
  \usepackage{mathtools}
\usepackage[final]{hyperref}
  \usepackage{cleveref}
\pgfplotsset{plot coordinates/math parser=false}

\newtheorem{mydef}{Definition}

\newtheorem{theorem}{Theorem}

\newtheorem{remark}{Remark}

\newcommand{\chainsumOne}{\ensuremath{\sum_{k_1=1}^{\sizeS} \sum_{k_2 = 1}^{\sizeS} \ldots \sum_{k_{n-2} = 1}^{\sizeS}}}
\newcommand{\chainsum}{\ensuremath{\sum_{k_1=1}^{\sizeS} \sum_{k_2 = 1}^{\sizeS} \ldots \sum_{k_{n-1} = 1}^{\sizeS}}}

\newcommand{\sizeS}{\ensuremath{|\myState|}}
\newcommand{\Si}[1]{\ensuremath{S_{#1}}}

\newcommand{\trans}[2]{\ensuremath{p_{#1#2}}}
\newcommand{\transM}{\ensuremath{P}}
\newcommand{\transMsup}[1]{\ensuremath{\transM^{#1}}}
\newcommand{\transMsupij}[3]{\ensuremath{\transMsup{#1}(#2, #3)}}
\newcommand{\transR}{\ensuremath{R}}
\newcommand{\transQ}{\ensuremath{Q}}

\newcommand{\fundMat}{\ensuremath{N}}

\newcommand{\rew}[2]{\ensuremath{\rho_{#1#2}}}
\newcommand{\Rew}{\ensuremath{\Theta}}

\newcommand{\RewSubTransAbs}{\ensuremath{\Theta_{2}}}
\newcommand{\RewSubTransTrans}{\ensuremath{\Theta_{1}}}

\newcommand{\RewSubCi}[1]{\ensuremath{B_{#1}}}
\newcommand{\RewSubDi}[1]{\ensuremath{A_{#1}}}
\newcommand{\RewSubC}{\ensuremath{\RewSubCi{n}}}
\newcommand{\RewSubD}{\ensuremath{\RewSubDi{n}}}

\newcommand{\HtransRew}{\ensuremath{H}}

\newcommand{\HSubTransAbs}{\ensuremath{H_{2}}}
\newcommand{\HSubTransTrans}{\ensuremath{H_{1}}}
\newcommand{\HtransRewij}[2]{\ensuremath{H(#1, #2)}}

\newcommand{\Rni}[1]{\ensuremath{R_{#1}}}
\newcommand{\Rn}{\ensuremath{\Rni{n}}}

\newcommand{\barR}{\ensuremath{\bar{R}_{n}}}
\newcommand{\barRij}[2]{\ensuremath{\bar{R}_{n}(#1, #2)}}
\newcommand{\barRsubij}[3]{\ensuremath{\bar{R}_{#1}(#2, #3)}}

\newcommand{\barRinf}{\ensuremath{\bar{R}_{\infty}}}
\newcommand{\barRi}[1]{\ensuremath{\bar{R}_{n}(#1)}}

\newcommand{\hatR}{\ensuremath{\hat{R}_{n}}}
\newcommand{\hatRij}[2]{\ensuremath{\hat{R}_{n}(#1, #2)}}

\newcommand{\hatRsub}[1]{\ensuremath{\hat{R}_{#1}}}
\newcommand{\hatRinf}{\ensuremath{\hatRsub{\infty}}}
\newcommand{\hatRsubij}[3]{\ensuremath{\hat{R}_{#1}(#2, #3)}}

\newcommand{\myState}{\ensuremath{\Omega}}

\newcommand{\StateAbs}{\ensuremath{\myState_{A}}}
\newcommand{\StateTrans}{\ensuremath{\myState_{T}}}

\newcommand{\norm}[1]{\left\lVert#1\right\rVert}




\newlength\figureheight 
\newlength\figurewidth 


\newtheorem{myLemma}{Lemma}

\newtheorem{mycor}{Corollary}

\newcounter{cnt}
\newcounter{mymagicrownumbers}

\newcommand{\Prob}{\ensuremath{\operatorname{Pr}}}

\usepackage{caption}
\usepackage[caption=false,font=footnotesize]{subfig}
\usepackage{url}



\hyphenation{op-tical net-works semi-conduc-tor}

\begin{document}
%
\title{Markov Rewards Processes with Impulse Rewards and Absorbing States}
%
%


\author{\IEEEauthorblockN{Louis Tan, Kaveh Mahdaviani and Ashish Khisti}
\thanks{L.~Tan, K.~Mahdaviani and A.~Khisti are with the Dept.\ of Electrical and Computer Engineering, University of Toronto, Toronto, ON, Canada (e-mail: louis.tan@mail.utoronto.ca, mahdaviani@cs.toronto.edu, akhisti@ece.utoronto.ca). 
}%
}%
\maketitle

\begin{abstract}
%

We study the expected accumulated reward for a discrete-time Markov reward model with absorbing states. The rewards are impulse rewards, where a reward $\rew{i}{j}$ is accumulated when transitioning from state $i$ to state $j$.  We derive an explicit, single-letter expression for the expected accumulated reward as a function of the number of time steps $n$ and include in our analysis the limit in which $n \to \infty$.

\end{abstract}


\begin{IEEEkeywords}
\end{IEEEkeywords}

%
\IEEEpeerreviewmaketitle

\section{Introduction}

Markov reward models have been a well-studied area of research for decades~\cite{howard1971dynamicv2} particularly in the literature for performance and dependability~\cite{Trivedi93, Muppala96, Blake1988,GayKetelsen1979,Beaudry78}.  Variations of the problem formulation have primarily been based on whether the Markov chain is discrete-time or continuous-time, whether there are any absorbing states in the state space, and whether rewards are assigned for the occupancy of a state (\emph{rate-based} Markov reward models) or for the transition to a state (\emph{impulse-based} Markov reward models).  Within any problem formulation, there have also been variations on the quantity of interest, with some authors calculating the expected instantaneous reward rate~\cite{Blake1988}, while others find the steady-state expected reward rate~\cite{GayKetelsen1979}, the expected accumulated reward~\cite{Blake1988}, the distribution of the accumulated reward until absorption~\cite{Beaudry78}, etc. (see~\cite{Trivedi93} for a review of the literature for different reward-based measures).  In terms of numerical computation, the topic of model checking for Markov chains has been used to verify whether certain properties hold such as~\cite{HanssonJonsson94}, ``after a request for service there is at least a 98\% probability that the service will be carried out within 2 seconds.''  Such a framework has also been extended for Markov reward models~\cite{KKZ05}.


Surprisingly, one formulation that has gone unstudied is that of finding the expected accumulated reward for a discrete-time Markov reward model with absorbing states and impulse rewards.  The continuous-time counterpart of this problem has been studied~\cite{Trivedi93}.  For a discrete-time model, to the author's knowledge, the results have either involved a steady-state analysis that excludes absorbing states~\cite{howard1971dynamicv2}, or a transient analysis for \emph{rate-based} models that include absorbing states but exclude impulse rewards~\cite{howard1971dynamicv2}.  


Granted, an impulse-based Markov reward model can be translated into a rate-based model by introducing an intermediary state between the transitioning states.  Specifically, suppose that in an impulse-based model, state~$i$ transitions to state~$j$ with probability $\trans{i}{j}$ and a reward of $\rew{i}{j}$ is assigned for such a transition.  Then in the rate-based counterpart to this model, for every such transition, we create an auxiliary state $k$ such that state~$i$ transitions to state~$k$ with probability~\trans{i}{j} and state~$k$ transitions to state~$j$ with probability one.  In this rate-based model, we assign the same reward~\rew{i}{j} for occupying state~$k$ and solve for the expected accumulated reward for rate-based models as in~\cite{howard1971dynamicv2} .  

While this approach is hypothetically possible, for a state space of size $|\myState|$, such an approach could potentially add an additional $|\myState|^2$ intermediate states if the states in the Markov model form a complete digraph.  As the solution in~\cite{howard1971dynamicv2} involves the inverting of a $|\myState| \times |\myState|$ matrix, the computational cost of this approach could be prohibitive.  In our work, we instead derive a closed-form solution for the expected accumulated reward without having to resort to introducing intermediary states.


\section{Problem Formulation}

A \emph{discrete-time Markov chain} is a sequence of random variables $\Si{0}, \Si{1}, \ldots $, where for every $i = 0, 1, \ldots$, state $\Si{i}$ takes values from the state space $\myState$, i.e., $\Si{i} \in \myState$, and the probability of transitioning from state $i$ at time $n-1$ to state $j$ at time $n$ given previous states $\Si{0}, \Si{1}, \ldots, \Si{n-1}$ satisfies the Markov property 

\begin{equation}
	\Prob(\Si{n} = j | \Si{n-1} = i, \Si{n- 2} = i_{n-2}, \ldots, \Si{1} = i_{1}) = \Prob(\Si{n} = j | \Si{n-1} = i).
\end{equation}
Without loss of generality, we assume the state space, $\myState$, is indexed by a set of integers so that $\myState = \{1, 2, \ldots, |\myState|\}$.

We study \emph{time-homogenous} Markov chains where the transition probabilities do not depend on $n$.  Specifically, at any time $n = 0, 1, \ldots$, the probability of transitioning from state $i$ to state $j$ does not depend on $n$, i.e., $\Prob(\Si{n} = j | \Si{n-1} = i) = \trans{i}{j}$.  At any time $n$, the transitions between states can therefore be described by a $|\myState| \times |\myState|$ \emph{transition matrix} $\transM$ whose $(i, j)$th entry is given by $\trans{i}{j}$ for $i, j \in \myState$.

In addition to being time-homogenous, the Markov chains we study are also \emph{absorbing}.

\begin{mydef}
	A state $i \in \myState$ is said to be \emph{absorbing} if for all $j \in \myState \setminus \{i\}$, $\trans{i}{j} = 0$  and $\trans{i}{i} = 1$.
\end{mydef}

\begin{mydef}
	A state is said to be \emph{transient} if it is not an absorbing state.
\end{mydef}

\begin{mydef}
	A discrete-time Markov chain is said to be an \emph{absorbing} Markov chain if it has at least one absorbing state, and it is possible to reach an absorbing state from any transient state within a finite number of transitions.
\end{mydef}

A \emph{Markov reward process} is a Markov chain that incorporates rewards that are accumulated during the evolution of the Markov chain.  The reward process we consider is additionally characterized by an \emph{impulse-reward matrix}.

\begin{mydef}
	The impulse-reward matrix, $\Rew$, is a $\sizeS \times \sizeS$ matrix whose $(i, j)th$ entry, $\rew{i}{j}$, represents the reward accumulated when transitioning from state $i$ to state $j$.
\end{mydef}
%
%
%
For any realization of a sequence of $n+1$ states from time step zero to time step $n$, i.e., given $\Si{0} = i_{0}$, $\Si{1} = i_{1}$, \ldots, $\Si{n} = i_{n}$, we define the accumulated reward $\Rn(\Si{0} = i_{0}, \Si{1} = i_{1}, \ldots, \Si{n} = i_{n})$ as 
\begin{equation}
	\label{eq:Rn}
	\Rn(\Si{0} = i_{0}, \Si{1} = i_{1}, \ldots, \Si{n} = i_{n}) = \sum_{k = 1}^{n} \rew{i_{k-1}}{i_{k}}.
\end{equation}

%


\begin{mydef}
\label{def:Rn}
	Let $\Rn$ be a random variable representing the accumulated reward at time step $n$ when the state sequence $\Si{0}, \Si{1}, \ldots, \Si{n}$ is taken to be random.   
\end{mydef}

\begin{mydef}
	\label{def:barRij}
	Let $\barRij{i}{j}$ be the expected value of $\Rn$ given initial state $\Si{0} = i$ and final state $\Si{n} = j$, i.e., $\barRij{i}{j} = \mathbb{E}(\Rn | \Si{0} = i, \Si{n} = j)$.  
\end{mydef}

From Definition~\ref{def:barRij}, the expression for $\barRij{i}{j}$ should be clear when $n = 1$.

\begin{mycor}
	\label{cor:barR1}
	$\barRsubij{1}{i}{j} = \rew{i}{j}$.
\end{mycor}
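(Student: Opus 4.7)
The plan is to show that the statement follows almost immediately by unpacking the definitions of $\Rn$ and $\barRij{i}{j}$ at $n = 1$, so there is no substantive obstacle to overcome; the only task is to carry out the substitution carefully and argue that the conditional expectation reduces to a deterministic quantity.

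First, I would specialize the defining sum in~\eqref{eq:Rn} to $n = 1$. The sum $\sum_{k=1}^{n} \rew{i_{k-1}}{i_{k}}$ collapses to its single term $\rew{i_{0}}{i_{1}}$, so that for any realization $\Si{0} = i_0$, $\Si{1} = i_1$, one has
\begin{equation}
R_{1}(\Si{0} = i_0, \Si{1} = i_1) = \rew{i_0}{i_1}.
\end{equation}
In particular, once we condition on $\Si{0} = i$ and $\Si{1} = j$, the random variable $R_{1}$ is no longer random at all: it equals the deterministic value $\rew{i}{j}$, independent of any other randomness in the chain (indeed, at $n = 1$ there are no other states to average over).

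Second, I would apply Definition~\ref{def:barRij} with $n = 1$. Since the conditional distribution of $R_{1}$ given $\Si{0} = i$ and $\Si{1} = j$ is a point mass at $\rew{i}{j}$, the conditional expectation is
\begin{equation}
\barRsubij{1}{i}{j} = \mathbb{E}(R_{1} \mid \Si{0} = i, \Si{1} = j) = \rew{i}{j},
\end{equation}
which is precisely the claim. The only mild subtlety worth noting is that the conditioning event $\{\Si{0} = i, \Si{1} = j\}$ has positive probability whenever $\trans{i}{j} > 0$; in the degenerate case $\trans{i}{j} = 0$ the conditional expectation is conventionally undefined, but the identity $\barRsubij{1}{i}{j} = \rew{i}{j}$ is vacuous on that set and can be assigned by convention without affecting any downstream use of the corollary.
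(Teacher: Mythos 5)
Your argument is correct and matches the paper's treatment: the paper states this corollary without proof, noting only that it "should be clear" from Definition~\ref{def:barRij}, and your unpacking of~\eqref{eq:Rn} at $n=1$ (the sum collapses to the single deterministic term $\rew{i}{j}$ once $\Si{0}=i$ and $\Si{1}=j$ are fixed) is exactly the intended justification. Your side remark about the conditioning event having zero probability when $\trans{i}{j}=0$ is a reasonable extra precaution but is not addressed in the paper.
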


\begin{mydef}
	Let $\barR$ be the $|\myState| \times |\myState|$ matrix whose $(i, j)$th entry is given by $\barRij{i}{j}$.
\end{mydef}

\begin{mydef}
	\label{def:hatRij}
	Let $\hatRij{i}{j}$ be a scaled version of $\barRij{i}{j}$ where $\hatRij{i}{j} = \barRij{i}{j} \times \Prob(\Si{n} = j | \Si{0} = i)$.
\end{mydef}

\begin{mydef}
\label{def:hatR}
	Let $\hatR$ be the $|\myState| \times |\myState|$ matrix whose $(i, j)$th entry is given by $\hatRij{i}{j}$.
\end{mydef}

We are often also interested in the expected accumulated reward at time $n$ given only initial state $\Si{0} = i$ irrespective of the state at time $n$.

\begin{mydef}
\label{def:barRi}
	Let $\barRi{i}$ be the expected accumulated reward at time $n$, given initial state $\Si{0} = i$.
\end{mydef}

As we will see in the next section, the probability that the Markov reward process eventually reaches an absorbing state is unity, and so the long-term accumulated reward, also known as the reward until absorption, is of interest.

\begin{mydef}
\label{def:hatRinfij}
	Let $\hatRinf(i, j) = \lim_{n \to \infty} \hatRij{i}{j}$ be the long-term value of $\hatRij{i}{j}$.
\end{mydef}

\begin{mydef}
\label{def:hatRinf}
	Let $\hatRinf$ be the $|\myState| \times |\myState|$ matrix whose $(i, j)$th entry is given by $\hatRinf(i, j)$.
\end{mydef}

\begin{mydef}
\label{def:barRinfi}
	Let $\barRinf(i) = \lim_{n \to \infty} \barRi{i}$ be the long-term value of $\barRi{i}$.
\end{mydef}


In practice, $\barRinf(i)$ is often the most relevant quantity of interest, since we start the Markov reward process in an initial state and wish to know the accumulated reward before absorption.  We therefore define our problem as that of finding an expression for $\barRinf(i)$.  

We do this by first spending the majority of our efforts in Section~\ref{subsec:scaled_rewards} to derive an expression for the scaled accumulated reward variables, which culminates in the derivation of the transient scaled accumulated reward, $\hatR$, in~Lemma~\ref{lem:hatRn_matrix} and the long-term scaled accumulated reward, $\hatRinf$, in Theorem~\ref{thm:hatR_inf}.  In Section~\ref{subsec:unscaled_rewards}, we then show how to use these expressions to derive expressions for the \emph{unscaled} accumulated reward variables.  These variables are conditioned on initial state $\Si{0} = i$, and consist of the transient accumulated reward, $\barRi{i}$, in Theorem~\ref{thm:barRi}, and the accumulated reward before absorption, $\barRinf(i)$, in Corollary~\ref{cor:barRinfi}.  Finally, if a prior distribution over the initial states is known, we show how to calculate the unconditional transient accumulated reward, $\barR$, in Theorem~\ref{thm:barR_prior} and the unconditional accumulated reward before absorption, $\barRinf$, in Corollary~\ref{cor:barRinf}.


\section{Background for Absorbing Markov Chains}

A discrete-time absorbing Markov chain has a state space $\myState$ that can be partitioned into a set of absorbing states, $\StateAbs$, and a set of transient states, $\StateTrans$, such that $\myState = \StateAbs \cup \StateTrans$.  Recall our assumption that the state space is indexed by the integers $\{1, 2, \ldots, |\myState|\}$.  We further assume that transient states have \emph{lower} index values than absorbing states.  If this is the case, we can write out the transition matrix in its canonical form 

\begin{equation}
	\label{eq:transM}
	\transM = 
	\begin{bmatrix}
		\transQ & \transR \\ 
		0 & I
	\end{bmatrix},
\end{equation}
where
\begin{enumerate}
	\item $\transQ$ is a $|\StateTrans| \times |\StateTrans|$ matrix whose elements represent the probability of transitioning from one transient state to another transient state
	\item $\transR$ is a $|\StateTrans| \times |\StateAbs|$ matrix whose elements represent the probability of transitioning from a transient state to an absorbing state
	\item The matrix $0$ is the $|\StateAbs| \times |\StateTrans|$ zero matrix whose elements represent the impossibility of transitioning from an absorbing state to a transient state
	\item $I$ is the $|\StateAbs| \times |\StateAbs|$ identity matrix whose elements represent the probability of transitioning from one absorbing state to another absorbing state.
\end{enumerate}

\begin{remark}
	\label{rem:Rew}
	Given the assumption that transient states have \emph{lower} index values than absorbing states, we may also assume without loss of generality that the impulse-reward matrix, $\Rew$, has the form 
	\begin{equation}
	\label{eq:Rew_sub}
		\Rew = 
			\begin{bmatrix}
				\RewSubTransTrans & \RewSubTransAbs \\ 
				0 & 0
			\end{bmatrix},
	\end{equation}
	where 
	\begin{enumerate}
		\item $\RewSubTransTrans$ is a $|\StateTrans| \times |\StateTrans|$ matrix whose elements represent the reward accumulated for transitioning from one transient state to another transient state
		\item $\RewSubTransAbs$ is a $|\StateTrans| \times |\StateAbs|$ matrix whose elements represent the reward accumulated for transitioning from a transient state to an absorbing state
		\item the zero matrices have the appropriate dimensions for $\Rew$ to be a $\sizeS \times \sizeS$ matrix.
	\end{enumerate}
\end{remark}

\begin{mydef}
	\label{def:HtransRew}
	Let $\HtransRew$ be the Hadamard (element-wise) product of the reward matrix $\Rew$ and transition matrix $\transM$, i.e., $\HtransRew = \Rew \odot \transM$ so that $\HtransRewij{i}{j} = \rew{i}{j}\trans{i}{j}$.
\end{mydef}

\begin{remark}
	\label{rem:HtransRew}
	Similar to Remark~\ref{rem:Rew}, we may assume without loss of generality that~$\HtransRew$ has the form 
	\begin{equation}
	\label{eq:HtransRew_sub}
		\HtransRew = 
			\begin{bmatrix}
				\HSubTransTrans & \HSubTransAbs \\ 
				0 & 0
			\end{bmatrix}.
	\end{equation}
\end{remark}

From Definition~\ref{def:hatRij}, and Corollary~\ref{cor:barR1}, the expression for $\hatRij{i}{j}$ should be clear when $n = 1$.

\begin{mycor}
	\label{cor:hatR1}
	$\hatRsubij{1}{i}{j} = \HtransRewij{i}{j}$.
\end{mycor}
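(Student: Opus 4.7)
The plan is a direct application of the relevant definitions, in the following order. First I would specialize Definition~\ref{def:hatRij} to the case $n = 1$, which yields $\hatRsubij{1}{i}{j} = \barRsubij{1}{i}{j} \times \Prob(\Si{1} = j \mid \Si{0} = i)$. Next I would invoke Corollary~\ref{cor:barR1} to substitute $\barRsubij{1}{i}{j} = \rew{i}{j}$, and then use the time-homogeneous transition probability definition $\Prob(\Si{1} = j \mid \Si{0} = i) = \trans{i}{j}$ from the problem formulation. This gives $\hatRsubij{1}{i}{j} = \rew{i}{j}\,\trans{i}{j}$.

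Finally I would appeal to Definition~\ref{def:HtransRew}, which says that $\HtransRewij{i}{j} = \rew{i}{j}\,\trans{i}{j}$, allowing me to identify the two expressions and conclude $\hatRsubij{1}{i}{j} = \HtransRewij{i}{j}$. There is no genuine obstacle here: the corollary is essentially a definitional unpacking, and the only thing to be careful about is citing the correct objects (the scaling factor in the definition of $\hatR$ is a transition probability rather than a general conditional, which collapses to a single-step transition when $n=1$).
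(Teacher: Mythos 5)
Your proposal is correct and follows exactly the route the paper intends: the paper simply remarks that the result ``should be clear'' from Definition~\ref{def:hatRij} and Corollary~\ref{cor:barR1}, and your unpacking (specialize the scaling definition to $n=1$, substitute $\barRsubij{1}{i}{j}=\rew{i}{j}$, identify $\Prob(\Si{1}=j\mid\Si{0}=i)=\trans{i}{j}$, and apply Definition~\ref{def:HtransRew}) is precisely that argument made explicit. No differences or gaps to report.
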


At time step $n$, the probability of being in state $j$ given initial state $i$ is given by the $(i, j)$th entry of $\transM^n$, which is given by the following lemma.

\begin{myLemma}
	\label{lem:transM_n}
	For $n = 1, 2, \ldots $, the transition matrix taken to the $n$th power is given by 
	\begin{equation}
	\label{eq:transM_n}
		\transM^n = 
			\begin{bmatrix}
				\transQ^n & \sum_{i = 0}^{n-1} \transQ^{i}\transR \\ 
				0 & I
			\end{bmatrix}.
	\end{equation}
\end{myLemma}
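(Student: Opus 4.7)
The plan is to prove Lemma~\ref{lem:transM_n} by induction on $n$, exploiting the block-triangular structure of $\transM$ in its canonical form~\eqref{eq:transM}.

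For the base case $n=1$, I would simply verify that the right-hand side of~\eqref{eq:transM_n} reduces to $\transM$: the top-left block $\transQ^1 = \transQ$ matches, and the top-right block $\sum_{i=0}^{0} \transQ^i \transR = \transQ^0 \transR = I \transR = \transR$ matches as well, while the bottom two blocks are unchanged.

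For the inductive step, assuming~\eqref{eq:transM_n} holds for some $n \geq 1$, I would compute $\transM^{n+1} = \transM \cdot \transM^{n}$ using block matrix multiplication. The top-left block becomes $\transQ \cdot \transQ^n = \transQ^{n+1}$, the bottom-left remains the zero matrix, and the bottom-right remains $I$ since the bottom row of $\transM$ is $\begin{bmatrix} 0 & I \end{bmatrix}$. The only slightly delicate block is the top-right, which evaluates to
\begin{equation*}
    \transQ \left( \sum_{i=0}^{n-1} \transQ^{i} \transR \right) + \transR \cdot I = \sum_{i=0}^{n-1} \transQ^{i+1} \transR + \transR = \sum_{j=1}^{n} \transQ^{j} \transR + \transQ^{0} \transR = \sum_{i=0}^{n} \transQ^{i} \transR,
\end{equation*}
which matches the claimed formula with $n$ replaced by $n+1$.

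There is no real obstacle here; the main thing to be careful about is the index shift in the geometric-like sum and making sure the block dimensions are compatible (which they are, since $\transQ$ is $|\StateTrans|\times|\StateTrans|$, $\transR$ is $|\StateTrans|\times|\StateAbs|$, and $I$ is $|\StateAbs|\times|\StateAbs|$). The induction closes and the lemma follows.
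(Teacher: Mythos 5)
Your proof is correct and follows essentially the same route as the paper: induction on $n$ with block matrix multiplication on the canonical form~\eqref{eq:transM}. The only cosmetic difference is that you write $\transM^{n+1} = \transM \cdot \transM^{n}$ while the paper uses $\transM^{n} = \transM^{n-1}\transM$; both close the induction identically, and your index-shift computation for the top-right block is accurate.
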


\begin{proof}
	We proceed by induction.
	\begin{LaTeXdescription}
		\item[Base case] It is readily verified by substituting $n = 1$ into \eqref{eq:transM_n} that we may recover the canonical form of the transition matrix in~\eqref{eq:transM}.
		\item[Inductive Hypothesis] We assume that for $n - 1 \geq 1$, 
			\begin{equation}
				\transM^{n-1} = 
					\begin{bmatrix*}[c]
						\transQ^{n-1} & \sum_{i = 0}^{n-2} \transQ^{i} \transR \\ 
						0 & I
					\end{bmatrix*}.
			\end{equation}
		\item[Induction Step] We have that
			\setcounter{cnt}{1}
			\begin{align}	
				\transM^n &= \transM^{n-1} \transM \\
				\label{eq:transM_n_mult}
				&\stackrel{(\alph{cnt})}{=} \begin{bmatrix*}[c]
						\transQ^{n-1} & \sum_{i = 0}^{n-2} \transQ^{i} \transR \\ 
						0 & I
					\end{bmatrix*}
					\begin{bmatrix}
						\transQ & \transR \\ 
						0 & I
					\end{bmatrix},%
			\end{align} where 
			\begin{enumerate}[(a)]
				\item follows from~\eqref{eq:transM} and the inductive hypothesis.
			\end{enumerate}
			After performing the block matrix multiplication in~\eqref{eq:transM_n_mult}, it is straightforward to see that the right-hand sides of~\eqref{eq:transM_n} and~\eqref{eq:transM_n_mult} are equal.
	\end{LaTeXdescription}
\end{proof}

\begin{myLemma}[{\hspace{1sp}\cite[Theorem 11.3]{GrinsteadSnell}}]
	\label{lem:transQn_zero}
	In an absorbing Markov chain, the probability that the process will be absorbed is 1~(i.e., $\transQ^n \to 0$ as $n \to \infty$).
\end{myLemma}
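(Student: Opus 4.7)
The plan is to show that from every transient state the probability of remaining transient decays geometrically in the number of steps, which forces $\transQ^n \to 0$ entrywise. First I would invoke the definition of an absorbing Markov chain: from every transient state $i \in \StateTrans$ there exists a finite-length path to some absorbing state with positive probability. Since $\StateTrans$ is finite, one can define $N = \max_{i \in \StateTrans} n_i$, where $n_i$ is the length of a shortest absorbing path from $i$, and set $p_{\min} = \min_{i \in \StateTrans} p_i > 0$, where $p_i$ is the probability of that path. Both quantities are well-defined and strictly positive.

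Next I would argue that for any transient state $i$, the probability that $\Si{N} \in \StateAbs$ given $\Si{0} = i$ is at least $p_{\min}$: one can first follow the shortest absorbing path (of length $n_i \le N$) to reach absorption, and then stay in the absorbing state for the remaining $N - n_i$ steps (which has probability $1$ by definition of an absorbing state). Consequently,
\begin{equation}
    \Prob(\Si{N} \in \StateTrans \mid \Si{0} = i) \le 1 - p_{\min} < 1.
\end{equation}
Iterating this bound via the Markov property and the law of total probability, for every positive integer $k$,
\begin{equation}
    \Prob(\Si{kN} \in \StateTrans \mid \Si{0} = i) \le (1 - p_{\min})^{k}.
\end{equation}

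Finally, I would translate this bound back to $\transQ^n$. By Lemma~\ref{lem:transM_n}, the $(i,j)$th entry of $\transQ^n$ equals $\Prob(\Si{n} = j \mid \Si{0} = i)$ for $i, j \in \StateTrans$, and therefore
\begin{equation}
    0 \le \transMsupij{n}{i}{j} = \transQ^n(i,j) \le \Prob(\Si{n} \in \StateTrans \mid \Si{0} = i) \le (1 - p_{\min})^{\lfloor n/N \rfloor}.
\end{equation}
Taking $n \to \infty$, the right-hand side vanishes, so every entry of $\transQ^n$ tends to zero. The main obstacle is making the uniform bound $N$ rigorous across all transient states; this is immediate from the finiteness of $\StateTrans$, but it is the key structural step that turns a pointwise existence of absorbing paths into a uniform geometric decay rate.
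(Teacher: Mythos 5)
Your proof is correct: the uniform bound $N$ and $p_{\min}$ over the finite set of transient states, the geometric decay $(1-p_{\min})^{\lfloor n/N\rfloor}$ via the Markov property, and the identification of $\transQ^n(i,j)$ with $\Prob(\Si{n}=j\mid \Si{0}=i)$ for transient $i,j$ are all sound. The paper does not prove this lemma itself but imports it from Grinstead and Snell (Theorem 11.3), and your argument is essentially the standard one given there, so there is nothing to reconcile.
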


\begin{myLemma}[{\hspace{1sp}\cite[Theorem 11.4]{GrinsteadSnell}}]
	\label{lem:fund_mat}
	For an absorbing Markov chain, the matrix $I - \transQ$ has an inverse, $\fundMat$, termed the \emph{fundamental matrix}, and $\fundMat  = I+\transQ+\transQ^2 + \ldots$
\end{myLemma}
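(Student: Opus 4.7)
The plan is to leverage Lemma~\ref{lem:transQn_zero}, which already gives us $\transQ^n \to 0$, and to build both the invertibility of $I - \transQ$ and the Neumann-series representation of its inverse on top of it. This is a two-step argument: first establish that $I - \transQ$ has an inverse, then show that this inverse equals the (convergent) sum $I + \transQ + \transQ^2 + \ldots$

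For invertibility, I would argue by contradiction on the null space. Suppose $(I - \transQ)\mathbf{x} = 0$ for some vector $\mathbf{x}$; then $\transQ \mathbf{x} = \mathbf{x}$, and iterating gives $\transQ^n \mathbf{x} = \mathbf{x}$ for every $n \geq 1$. Taking $n \to \infty$ and applying Lemma~\ref{lem:transQn_zero} forces $\mathbf{x} = 0$, so $I - \transQ$ is non-singular and $\fundMat \triangleq (I - \transQ)^{-1}$ is well-defined.

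For the series representation, I would start from the telescoping identity
\begin{equation}
    (I - \transQ)\bigl(I + \transQ + \transQ^2 + \ldots + \transQ^{n-1}\bigr) = I - \transQ^n,
\end{equation}
which holds for every $n \geq 1$. Multiplying both sides on the left by $\fundMat$ gives $I + \transQ + \ldots + \transQ^{n-1} = \fundMat - \fundMat \transQ^n$. Letting $n \to \infty$ and invoking Lemma~\ref{lem:transQn_zero} once more, the term $\fundMat \transQ^n$ vanishes, so the partial sums converge to $\fundMat$.

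The main obstacle, modest as it is, is justifying the limit in the last step. The identity $I + \transQ + \ldots + \transQ^{n-1} = \fundMat - \fundMat \transQ^n$ converts the question of convergence of the Neumann series into the simpler question of whether $\fundMat \transQ^n \to 0$, and this follows immediately from $\transQ^n \to 0$ and the boundedness of the fixed matrix $\fundMat$. No separate spectral-radius argument is needed, which is a pleasant feature of routing the proof through Lemma~\ref{lem:transQn_zero} rather than through the characteristic polynomial of $\transQ$.
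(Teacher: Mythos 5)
Your proof is correct and is essentially the classical argument for this result: the paper itself offers no proof, deferring to \cite[Theorem 11.4]{GrinsteadSnell}, and your two steps (the null-space argument for invertibility of $I - \transQ$ via $\transQ^n \to 0$, then the telescoping identity to identify the Neumann series with $\fundMat$) are exactly the standard route taken there. Both steps rest only on Lemma~\ref{lem:transQn_zero}, which is available at this point in the paper, so nothing further is needed.
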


\begin{myLemma}
	\label{lem:transM_infty}
	The steady state probability $\transM^{\infty} = \lim_{n \to \infty} \transM^{n}$ is given by 
	\begin{equation}
	\label{eq:transM_infty}
		\transM^{\infty} = 
			\begin{bmatrix}
				0 & \fundMat \transR \\ 
				0 & I
			\end{bmatrix}.
	\end{equation}
\end{myLemma}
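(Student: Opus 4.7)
The plan is to establish the result by taking the entry-wise limit of the block form of $\transM^n$ provided in Lemma~\ref{lem:transM_n}, treating each of the four blocks separately. Since $\transM^{\infty}$ is defined as $\lim_{n \to \infty} \transM^{n}$ and the limit of a block matrix (when it exists) is the block matrix of limits, this reduces the claim to four limit computations, two of which are trivial because the corresponding blocks in $\transM^n$ do not depend on $n$.

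Concretely, I would start by recalling from Lemma~\ref{lem:transM_n} that
\begin{equation*}
    \transM^n = \begin{bmatrix} \transQ^n & \sum_{i = 0}^{n-1} \transQ^{i}\transR \\ 0 & I \end{bmatrix}.
\end{equation*}
The bottom-left and bottom-right blocks are independent of $n$, so their limits are $0$ and $I$ respectively, matching the claimed form in~\eqref{eq:transM_infty}. The top-left block $\transQ^n$ converges to the zero matrix by Lemma~\ref{lem:transQn_zero}. For the top-right block, I would invoke Lemma~\ref{lem:fund_mat}, which gives $\fundMat = I + \transQ + \transQ^2 + \ldots$, so that $\lim_{n \to \infty} \sum_{i=0}^{n-1} \transQ^i = \fundMat$. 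Since matrix multiplication by the fixed matrix $\transR$ is continuous, we conclude $\lim_{n \to \infty} \sum_{i=0}^{n-1} \transQ^i \transR = \fundMat \transR$.

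There is no substantial obstacle here: the lemma is essentially an assembly step that combines Lemma~\ref{lem:transM_n}, Lemma~\ref{lem:transQn_zero}, and Lemma~\ref{lem:fund_mat}. The only minor care needed is to confirm that the Neumann-like series $\sum_{i=0}^{\infty} \transQ^i$ converges in the appropriate entry-wise sense; this is precisely the content of Lemma~\ref{lem:fund_mat}, which already asserts the identity $\fundMat = I + \transQ + \transQ^2 + \ldots$ for an absorbing chain, so no additional convergence argument is required. Assembling the four limits into the appropriate $2 \times 2$ block matrix then gives~\eqref{eq:transM_infty}, completing the proof.
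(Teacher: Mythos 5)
Your proposal is correct and follows exactly the same route as the paper, which proves the lemma by substituting Lemma~\ref{lem:transQn_zero} (for $\transQ^n \to 0$) and Lemma~\ref{lem:fund_mat} (for $\sum_{i=0}^{n-1}\transQ^i \to \fundMat$) into the block expression for $\transM^n$ from Lemma~\ref{lem:transM_n}. You simply spell out the block-by-block limit argument in more detail than the paper's one-line proof.
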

\begin{proof}
	This follows from substituting the results of Lemmas~\ref{lem:transQn_zero} and~\ref{lem:fund_mat} into the expression for $\transM^n$ in~\eqref{eq:transM_n}.
\end{proof}

\begin{mydef}[{\hspace{1sp}\cite[Definition 5.6.8]{HornJohnson}}]
	The \emph{spectral radius} $\rho(A)$ of a matrix $A \in \mathbb{R}^{n \times n}$ is 
	\begin{equation}
		\rho(A) \triangleq \max \{|\lambda| : \lambda \textrm{ is an eigenvalue of } A\}.
	\end{equation}
\end{mydef}

The spectral radius is itself not a matrix norm, however the following corollary states that there exists a norm that is arbitrarily close to the spectral radius.

\begin{myLemma}[{\hspace{1sp}\cite[Lemma 5.6.10]{HornJohnson}}]
	\label{lem:norm_bounds}
	Let $A \in \mathbb{R}^{n \times n}$ and $\epsilon > 0$ be given.  There is at least one matrix norm $\norm{\cdot}$ such that $\rho(A) \leq \norm{A} \leq \rho(A) + \epsilon$.
\end{myLemma}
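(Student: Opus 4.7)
The plan is to prove the two inequalities $\rho(A) \leq \norm{A}$ and $\norm{A} \leq \rho(A) + \epsilon$ separately, with the latter carrying most of the substantive work. For the lower bound, I would use a standard argument valid for any submultiplicative matrix norm: if $\lambda$ is an eigenvalue of $A$ with eigenvector $v \neq 0$, and $V$ denotes the $n \times n$ matrix whose columns are all copies of $v$, then $AV = \lambda V$ gives $|\lambda|\,\norm{V} = \norm{AV} \leq \norm{A}\,\norm{V}$; dividing by $\norm{V} > 0$ and maximizing over eigenvalues yields $\rho(A) \leq \norm{A}$.

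For the upper bound, my strategy is to construct a bespoke norm by pulling back a standard induced norm through a similarity transformation chosen to force the off-diagonal entries to be small. First, I would invoke the Jordan canonical form: there exists an invertible $S$ such that $S^{-1} A S = J$, where $J$ is block diagonal with each block of the form $\lambda_k I + N_k$, with $N_k$ nilpotent and carrying $1$'s immediately above the diagonal. Next, I would introduce the diagonal scaling $D_t = \operatorname{diag}(1, t, t^2, \ldots, t^{n-1})$ and compute that $D_t^{-1} J D_t$ retains the eigenvalues of $J$ on its diagonal but has each superdiagonal $1$ replaced by $t$. Setting $t = \epsilon$, the maximum absolute row sum of $D_t^{-1} J D_t$ is then at most $\rho(A) + \epsilon$.

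I would then define $\norm{B} \triangleq \norm{(SD_t)^{-1} B (SD_t)}_\infty$, where $\norm{\cdot}_\infty$ is the induced maximum row sum norm. Since conjugation by a fixed invertible matrix preserves positivity, absolute homogeneity, the triangle inequality, and submultiplicativity, $\norm{\cdot}$ is a bona fide matrix norm. Specializing to $B = A$ yields $\norm{A} = \norm{D_t^{-1} J D_t}_\infty \leq \rho(A) + \epsilon$, completing the upper bound.

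The main obstacle I anticipate is reconciling the real hypothesis $A \in \mathbb{R}^{n \times n}$ with the fact that the Jordan canonical form generally requires a complex similarity. A clean resolution is to invoke the \emph{real} Jordan canonical form, in which complex conjugate eigenvalue pairs give $2 \times 2$ rotation-type blocks; the scaling argument then proceeds with a block-level refinement of $D_t$, and a short estimate shows the maximum absolute row sum is still bounded by $\rho(A) + \epsilon$. Alternatively, one can work with the complex Jordan form and verify that the resulting norm, although defined in terms of a complex similarity, restricts to a legitimate norm on $\mathbb{R}^{n \times n}$ satisfying the desired bound.
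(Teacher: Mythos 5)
The paper offers no proof of this lemma at all---it is imported verbatim from Horn and Johnson (Lemma 5.6.10), which is precisely the source of the argument you reconstruct. Your proof is correct and is the standard one found there: the eigenvector-matrix trick for $\rho(A) \leq \norm{A}$, the Jordan form with the diagonal scaling $D_t$ shrinking the superdiagonal to $t=\epsilon$, the similarity-conjugated norm $B \mapsto \norm{(SD_t)^{-1}B(SD_t)}_{\infty}$ for the upper bound, and a sensible resolution of the real-versus-complex similarity point.
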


\begin{myLemma}[{\hspace{1sp}\cite[Lemma 5.6.12]{HornJohnson}}]
	\label{lem:spectral}
	Let $A \in \mathbb{R}^{n \times n}$.  Then $\lim_{n \to \infty} A^n = 0$ if and only if $\rho(A) < 1$.
\end{myLemma}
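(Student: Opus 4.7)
The plan is to prove the two directions separately, leveraging Lemma~\ref{lem:norm_bounds} for the nontrivial direction and a direct eigenvalue argument for the converse.

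For the forward direction ($\rho(A) < 1 \Rightarrow A^n \to 0$), I would start by choosing $\epsilon > 0$ small enough that $\rho(A) + \epsilon < 1$, which is possible since $\rho(A) < 1$. Lemma~\ref{lem:norm_bounds} then guarantees the existence of a matrix norm $\norm{\cdot}$ for which $\norm{A} \leq \rho(A) + \epsilon < 1$. Using submultiplicativity, $\norm{A^n} \leq \norm{A}^n$, and since $\norm{A} < 1$, the right-hand side tends to zero as $n \to \infty$. Because all norms on the finite-dimensional space $\mathbb{R}^{n \times n}$ are equivalent, convergence in this particular norm implies entry-wise convergence, so $\lim_{n \to \infty} A^n = 0$.

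For the converse direction ($A^n \to 0 \Rightarrow \rho(A) < 1$), I would proceed by contrapositive or by direct eigenvalue analysis. Let $\lambda$ be an arbitrary eigenvalue of $A$ with corresponding nonzero eigenvector $x$. Then a simple induction yields $A^n x = \lambda^n x$ for every $n \geq 1$. If $A^n \to 0$, then $A^n x \to 0$ as well, which forces $\lambda^n x \to 0$. Since $x \neq 0$, this requires $|\lambda|^n \to 0$, i.e., $|\lambda| < 1$. Since this holds for every eigenvalue of $A$, we conclude $\rho(A) = \max\{|\lambda| : \lambda \text{ eigenvalue of } A\} < 1$.

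I do not expect a serious obstacle here, as this is a classical result and the heavy lifting is already contained in Lemma~\ref{lem:norm_bounds}. The only subtlety worth flagging in the write-up is the invocation of norm equivalence on $\mathbb{R}^{n \times n}$ to convert convergence in the special norm provided by Lemma~\ref{lem:norm_bounds} into entry-wise convergence (i.e., convergence to the zero matrix in the usual sense). Everything else reduces to submultiplicativity of matrix norms and the eigenvalue identity $A^n x = \lambda^n x$.
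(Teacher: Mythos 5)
Your proof is correct. Note, however, that the paper does not prove this statement at all: it is imported verbatim as \cite[Lemma 5.6.12]{HornJohnson}, so there is no in-paper argument to compare against. Your two directions are exactly the standard textbook proof --- the forward direction via Lemma~\ref{lem:norm_bounds} with $\epsilon$ chosen so that $\rho(A)+\epsilon<1$, submultiplicativity, and equivalence of norms on the finite-dimensional space of matrices; the converse via the identity $A^n x=\lambda^n x$ for an eigenpair $(\lambda,x)$ with $x\neq 0$. The only micro-point worth a clause in the write-up is that for $A\in\mathbb{R}^{n\times n}$ the eigenvector $x$ may lie in $\mathbb{C}^n$, but entrywise convergence $A^n\to 0$ still forces $A^n x\to 0$ for any fixed complex $x$, so the argument goes through unchanged.
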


By Lemmas~\ref{lem:transQn_zero} and~\ref{lem:spectral}, we have that $\rho(\transQ) < 1$.  By appropriately defining $\epsilon$ in Lemma~\ref{lem:norm_bounds}, it then follows that there is a matrix norm for which $\norm{\transQ} < 1$.

\begin{mycor}
	\label{cor:transQ_norm}
	For an absorbing markov chain, there exists a matrix norm for which $\norm{\transQ} < 1$.
\end{mycor}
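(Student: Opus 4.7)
The plan is to chain together the three preceding lemmas in the obvious way, since the paragraph immediately preceding the corollary already signals the argument. The goal is to exhibit an $\epsilon > 0$ small enough that the norm produced by Lemma~\ref{lem:norm_bounds} is strictly less than $1$.

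First I would invoke Lemma~\ref{lem:transQn_zero}, which gives that $\transQ^n \to 0$ as $n \to \infty$ for an absorbing Markov chain. Next, I would apply the ``only if'' direction of Lemma~\ref{lem:spectral} with $A = \transQ$ to conclude that the spectral radius satisfies $\rho(\transQ) < 1$. In particular, the quantity $\epsilon \triangleq (1 - \rho(\transQ))/2$ is strictly positive.

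With this choice of $\epsilon$, I would then invoke Lemma~\ref{lem:norm_bounds} to obtain a matrix norm $\norm{\cdot}$ satisfying
\begin{equation}
\rho(\transQ) \;\leq\; \norm{\transQ} \;\leq\; \rho(\transQ) + \epsilon \;=\; \frac{1 + \rho(\transQ)}{2} \;<\; 1,
\end{equation}
which is precisely the claim. There is no real obstacle here: the argument is essentially a direct citation chain, and the only small piece of care is picking $\epsilon$ small enough (any value in $(0, 1 - \rho(\transQ))$ works) so that the upper bound $\rho(\transQ) + \epsilon$ remains strictly below $1$.
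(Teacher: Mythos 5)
Your proposal is correct and follows exactly the paper's own argument: Lemmas~\ref{lem:transQn_zero} and~\ref{lem:spectral} give $\rho(\transQ) < 1$, and Lemma~\ref{lem:norm_bounds} with a suitably small $\epsilon$ yields the norm. Your explicit choice $\epsilon = (1-\rho(\transQ))/2$ just makes concrete what the paper leaves as ``appropriately defining $\epsilon$.''
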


\section{Expected Scaled Rewards}
\label{subsec:scaled_rewards}

We first derive Lemma~\ref{lem:barRij_property} to express $\barRij{i}{j}$ in terms of the elements in both the reward matrix and the transition matrix.  We then use this lemma to derive a recurrence relation for the scaled transient accumulated reward, $\hatR$, in~Lemma~\ref{lem:recurrence}.  We use the recurrence relation to derive an actual expression for the transient scaled reward in Lemma~\ref{lem:hatRn_matrix}.  Finally, we use properties of absorbing Markov chains to derive a single letter expression for the long-term scaled reward, $\hatRinf$, in Theorem~\ref{thm:hatR_inf}.

\begin{myLemma}
	\label{lem:barRij_property}
	Let $\barRij{i}{j}$ be defined as in Definition~\ref{def:barRij}.  For $n = 2, 3, \ldots$
	\begin{equation}
		\barRij{i}{j} = \chainsum \left( \rew{i}{k_1} + \rew{k_1}{k_2} + \ldots + \rew{k_{n-1}}{j}\right) \times \frac{\trans{i}{k_1}\trans{k_1}{k_2} \ldots \trans{k_{n-1}}{j}}{\Prob(\Si{n} = j| \Si{0} = i)}.
	\end{equation}
\end{myLemma}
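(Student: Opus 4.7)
The plan is to expand the conditional expectation $\mathbb{E}(\Rn \mid \Si{0} = i, \Si{n} = j)$ by marginalizing over all possible realizations of the intermediate states $\Si{1}, \Si{2}, \ldots, \Si{n-1}$, and then to exploit the Markov property to factorize the joint transition probabilities into a product of single-step transition probabilities.

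First, I would write the conditional expectation explicitly as a sum over the $|\myState|^{n-1}$ possible intermediate trajectories:
\begin{equation*}
    \barRij{i}{j} = \chainsum \Rn(\Si{0} = i, \Si{1} = k_1, \ldots, \Si{n-1} = k_{n-1}, \Si{n} = j) \cdot \Prob(\Si{1} = k_1, \ldots, \Si{n-1} = k_{n-1} \mid \Si{0} = i, \Si{n} = j).
\end{equation*}
The reward factor is immediately handled by invoking the definition in~\eqref{eq:Rn}, which gives the telescoping sum $\rew{i}{k_1} + \rew{k_1}{k_2} + \ldots + \rew{k_{n-1}}{j}$.

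Next, I would rewrite the conditional probability using the identity $\Prob(A \mid B, C) = \Prob(A, B \mid C) / \Prob(B \mid C)$ with $C = \{\Si{0} = i\}$, $B = \{\Si{n} = j\}$, and $A = \{\Si{1} = k_1, \ldots, \Si{n-1} = k_{n-1}\}$. The denominator becomes $\Prob(\Si{n} = j \mid \Si{0} = i)$ as desired, and the numerator is the joint probability of the entire trajectory $(\Si{1} = k_1, \ldots, \Si{n-1} = k_{n-1}, \Si{n} = j)$ conditioned on $\Si{0} = i$. Applying the Markov property repeatedly (via the chain rule for conditional probabilities) factorizes this numerator as $\trans{i}{k_1}\trans{k_1}{k_2} \cdots \trans{k_{n-1}}{j}$, yielding the claimed expression.

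There is no real obstacle here since the result is essentially a restatement of the definition of conditional expectation combined with the Markov property; the only care needed is to ensure that $\Prob(\Si{n} = j \mid \Si{0} = i) > 0$ so that the conditioning is well-defined (otherwise $\barRij{i}{j}$ itself would not be defined). This can be assumed implicitly whenever we speak of $\barRij{i}{j}$. The induction-style handling of the Markov factorization is routine and will be handled succinctly in a single display.
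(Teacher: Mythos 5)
Your proposal is correct and follows essentially the same route as the paper's proof: expand $\barRij{i}{j}$ over all intermediate trajectories via the law of total expectation, evaluate the trajectory-conditioned reward using~\eqref{eq:Rn}, rewrite the conditional probability via Bayes' rule with denominator $\Prob(\Si{n} = j \mid \Si{0} = i)$, and factorize the numerator with the Markov property. Your added remark that the conditioning requires $\Prob(\Si{n} = j \mid \Si{0} = i) > 0$ is a reasonable point of care that the paper leaves implicit.
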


\begin{proof}
	We calculate 
	\setcounter{cnt}{1}
	\begin{fleqn}
		\begin{alignat}{2}
			&\barRij{i}{j} \nonumber \\
			&\stackrel{(\alph{cnt})}{=} \mathbb{E}_{\Si{1}, \Si{2}, \ldots, \Si{n-1}} \mathbb{E}(\Rn | \Si{0} = i, \Si{1}, \Si{2}, \ldots, \Si{n-1}, \Si{n} = j) \\
				\addtocounter{cnt}{1}
			&= \chainsum  \Big\{ \mathbb{E}(\Rn | \Si{0} = i, \Si{1} = k_1, \Si{2} = k_2, \ldots, \Si{n-1} = k_{n-1},\Si{n} = j) \phantom{\Big\}} \nonumber \\
				& \hphantom{=\chainsum} \qquad \times \vphantom{\Big\{} \Prob(\Si{1} = k_1, \Si{2} = k_2, \ldots, \Si{n-1} = k_{n-1}| \Si{0} = i, \Si{n} = j) \Big\} \\
			&\stackrel{(\alph{cnt})}{=} \chainsum  \left\{ \mathbb{E}(\Rn | \Si{0} = i, \Si{1} = k_1, \Si{2} = k_2, \ldots, \Si{n-1} = k_{n-1},\Si{n} = j) \vphantom{\frac{\Prob\Si{1}}{\Prob\Si{2}}} \right. \nonumber \\
				&\hphantom{=\chainsum} \qquad \times \left. \frac{\Prob(\Si{1} = k_1, \Si{2} = k_2, \ldots, \Si{n} = j | \Si{0} = i)}{\Prob(\Si{n} = j| \Si{0} = i)} \right\} \\
		\addtocounter{cnt}{1}
			&\stackrel{(\alph{cnt})}{=} \chainsum \left( \rew{i}{k_1} + \rew{k_1}{k_2} + \ldots + \rew{k_{n-1}}{j}\right) \times \frac{\Prob(\Si{1} = k_1, \Si{2} = k_2, \ldots, \Si{n} = j | \Si{0} = i)}{\Prob(\Si{n} = j| \Si{0} = i)} \\
		\addtocounter{cnt}{1}
			&\stackrel{(\alph{cnt})}{=} \chainsum \left( \rew{i}{k_1} + \rew{k_1}{k_2} + \ldots + \rew{k_{n-1}}{j}\right) \times \frac{\trans{i}{k_1}\trans{k_1}{k_2} \ldots \trans{k_{n-1}}{j}}{\Prob(\Si{n} = j| \Si{0} = i)} 
		\end{alignat}
	\end{fleqn}
	where 
	\begin{enumerate}[(a)]
		\item follows from Definition~\ref{def:barRij} and the law of total expectation
		\item follows from Bayes' Theorem
		\item follows from the fact that we have conditioned on each state from time $t = 0, 1, 2, \ldots, n$, and so the additive rewards for each transition is known (see~\eqref{eq:Rn})
		\item follows from the Markov property
	\end{enumerate}
\end{proof}

\begin{myLemma}
	Let $\hatR$ be defined as in Definition~\ref{def:hatR}.  For the case of $n = 2$, 
	\begin{equation}
		\label{eq:hatR2}
		\hatRsub{2} = \HtransRew \transM + \transM \HtransRew,
	\end{equation}
	where $\HtransRew$ is defined in Definition~\ref{def:HtransRew} and $\transM$ is given by~\eqref{eq:transM}.
\end{myLemma}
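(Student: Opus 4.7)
The plan is to compute the $(i,j)$th entry of $\hatRsub{2}$ directly from its definition, apply Lemma~\ref{lem:barRij_property} at $n = 2$, and then recognize the resulting sums as entries of the two matrix products on the right-hand side of~\eqref{eq:hatR2}.

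First, I would use Definition~\ref{def:hatR} and Definition~\ref{def:hatRij} to write $\hatRsubij{2}{i}{j} = \barRsubij{2}{i}{j} \cdot \Prob(\Si{2} = j \mid \Si{0} = i)$. Then, invoking Lemma~\ref{lem:barRij_property} with $n = 2$, the expression for $\barRsubij{2}{i}{j}$ carries a factor $1/\Prob(\Si{2} = j \mid \Si{0} = i)$ in each summand, which cancels cleanly against the probability in the definition of $\hatRsub{2}$. This yields
\begin{equation}
    \hatRsubij{2}{i}{j} = \sum_{k_1 = 1}^{\sizeS} \left(\rew{i}{k_1} + \rew{k_1}{j}\right) \trans{i}{k_1} \trans{k_1}{j}.
\end{equation}

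Next, I would split this into two sums, grouping the reward contribution from the first transition with the probability of the first transition, and likewise for the second transition:
\begin{equation}
    \hatRsubij{2}{i}{j} = \sum_{k_1 = 1}^{\sizeS} \left(\rew{i}{k_1}\trans{i}{k_1}\right) \trans{k_1}{j} + \sum_{k_1 = 1}^{\sizeS} \trans{i}{k_1} \left(\rew{k_1}{j} \trans{k_1}{j}\right).
\end{equation}
By Definition~\ref{def:HtransRew}, the parenthesized factors in the first sum are $\HtransRewij{i}{k_1}$ and in the second sum are $\HtransRewij{k_1}{j}$. Hence the first sum equals the $(i,j)$th entry of $\HtransRew \transM$ and the second sum equals the $(i,j)$th entry of $\transM \HtransRew$, which establishes~\eqref{eq:hatR2} entrywise.

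There is no serious obstacle here; the only care needed is to track the cancellation of the $\Prob(\Si{2} = j \mid \Si{0} = i)$ factor when multiplying by the scaling in Definition~\ref{def:hatRij}, and to correctly pair the impulse reward $\rew{i}{k_1}$ with its associated one-step probability $\trans{i}{k_1}$ (and similarly for the second transition) so that the sums factor into matrix products of $\HtransRew$ and $\transM$ in the correct order.
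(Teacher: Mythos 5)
Your proposal is correct and follows essentially the same route as the paper's proof: both substitute $n=2$ into Lemma~\ref{lem:barRij_property}, cancel the conditioning probability against the scaling in Definition~\ref{def:hatRij}, split the resulting sum into two pieces, identify $\rew{i}{k_1}\trans{i}{k_1}$ and $\rew{k_1}{j}\trans{k_1}{j}$ as entries of $\HtransRew$, and read off the two sums as the $(i,j)$th entries of $\HtransRew\transM$ and $\transM\HtransRew$. No gaps.
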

\begin{proof}
	\setcounter{cnt}{1}
	We use Definition~\ref{def:hatRij} and substitute $n = 2$ into Lemma~\ref{lem:barRij_property} to get 
	\begin{align}
		\hatRsubij{2}{i}{j} &= \sum_{k_1 = 1}^{\sizeS} (\rew{i}{k_1} + \rew{k_1}{j}) \trans{i}{k_1}\trans{k_1}{j} \\
		&= \sum_{k_1 = 1}^{\sizeS}(\rew{i}{k_1}\trans{i}{k_1}) \trans{k_1}{j} +  \trans{i}{k_1}(\rew{k_1}{j} \trans{k_1}{j} ) \\
		&\stackrel{(\alph{cnt})}{=} \sum_{k_1 = 1}^{\sizeS}\HtransRewij{i}{k_1} \trans{k_1}{j} +  \trans{i}{k_1}\HtransRewij{k_1}{j} \\	
		\addtocounter{cnt}{1}		
		&\stackrel{(\alph{cnt})}{=} [\HtransRew \transM]_{i, j} +  [\transM \HtransRew]_{i, j} 	
	\end{align}
	where 
\begin{enumerate}[(a)]
	\item follows from Definition~\ref{def:HtransRew}
	\item follows from the definition of matrix multiplication
\end{enumerate}
\end{proof}


We now use Lemma~\ref{lem:barRij_property} to derive a recurrence relation for $\hatR$, the scaled transient accumulated reward at time step $n$.

\begin{myLemma}
\label{lem:recurrence}
	Let $\hatR$ be defined as in Definition~\ref{def:hatR}.  For $n = 2, 3,  \ldots $
	\begin{equation}
	\label{eq:lem_recurrence}
		\hatR = \hatRsub{n-1} \transM + \transMsup{n-1}\HtransRew,
	\end{equation}
	where $\HtransRew$ is defined in Definition~\ref{def:HtransRew} and $\transM$ is given by~\eqref{eq:transM}.
\end{myLemma}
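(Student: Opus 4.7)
The plan is to start from Lemma~\ref{lem:barRij_property}, multiply both sides by $\Prob(\Si{n} = j | \Si{0} = i)$, and use Definition~\ref{def:hatRij} to obtain
\[
\hatRij{i}{j} = \chainsum \left(\rew{i}{k_1} + \rew{k_1}{k_2} + \ldots + \rew{k_{n-1}}{j}\right) \trans{i}{k_1}\trans{k_1}{k_2}\cdots\trans{k_{n-1}}{j}.
\]
The key algebraic move is to split the sum of impulse rewards into two groups: the first $n-1$ increments $\rew{i}{k_1} + \rew{k_1}{k_2} + \ldots + \rew{k_{n-2}}{k_{n-1}}$ (corresponding to transitions during times $1, \ldots, n-1$) and the final increment $\rew{k_{n-1}}{j}$ (corresponding to the transition at time $n$).

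For the first group, I would pull the sum over $k_{n-1}$ (and the factor $\trans{k_{n-1}}{j}$) to the outside. The remaining inner summation $\chainsumOne (\rew{i}{k_1} + \ldots + \rew{k_{n-2}}{k_{n-1}}) \trans{i}{k_1}\cdots\trans{k_{n-2}}{k_{n-1}}$ is precisely the expression for $\hatRsubij{n-1}{i}{k_{n-1}}$ obtained by applying Lemma~\ref{lem:barRij_property} at the index $n-1$ (which handles $n-1 \geq 2$; the case $n=2$ reduces to Corollary~\ref{cor:hatR1} and recovers the already-established formula \eqref{eq:hatR2}). The outer sum over $k_{n-1}$ is then recognized as the $(i,j)$ entry of the product $\hatRsub{n-1}\transM$.

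For the second group, the local factor $\rew{k_{n-1}}{j}\trans{k_{n-1}}{j}$ is identified with $\HtransRewij{k_{n-1}}{j}$ by Definition~\ref{def:HtransRew}, while the remaining product $\chainsumOne \trans{i}{k_1}\cdots\trans{k_{n-2}}{k_{n-1}}$ is the $(i, k_{n-1})$ entry of $\transMsup{n-1}$ by the Chapman--Kolmogorov equation (or, equivalently, Lemma~\ref{lem:transM_n} combined with the block structure). Summing over $k_{n-1}$ yields the $(i,j)$ entry of $\transMsup{n-1}\HtransRew$. Adding the two contributions gives \eqref{eq:lem_recurrence}.

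The main obstacle is purely notational bookkeeping: keeping the chain-sum indices aligned when peeling off the last transition and justifying that the inner summation truly equals $\hatRsubij{n-1}{i}{k_{n-1}}$ uniformly in $k_{n-1}$ (rather than just for a fixed target state). Once that identification is made, the rest is a clean application of the definition of matrix multiplication.
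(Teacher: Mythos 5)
Your proposal is correct and follows essentially the same route as the paper's proof: scale Lemma~\ref{lem:barRij_property} by $\Prob(\Si{n}=j\mid\Si{0}=i)$, peel off the last transition to split the reward sum into the first $n-1$ increments (yielding $[\hatRsub{n-1}\transM]_{i,j}$) and the final increment (yielding $[\transMsup{n-1}\HtransRew]_{i,j}$ via the entrywise formula for matrix powers), and handle $n=2$ separately via Corollary~\ref{cor:hatR1} and \eqref{eq:hatR2}. No substantive differences.
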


\begin{proof}
We first prove the lemma for $n = 3, 4, \ldots $
\setcounter{cnt}{1}
\begin{fleqn}
	\begin{alignat}{2}
		&\hatRij{i}{j} \nonumber \\
			&\stackrel{(\alph{cnt})}{=} \sum_{k_{n-1}=1}^{\sizeS} \left( \left\{\chainsumOne \left( \rew{i}{k_1} + \rew{k_1}{k_2} + \ldots + \rew{k_{n-2}}{k_{n-1}}\right) \times \trans{i}{k_1}\trans{k_1}{k_2} \ldots \trans{k_{n-2}}{k_{n-1}}\right\} \right.  \\
			&\hphantom{=\sum_{k_{n-1} = 1}^{\sizeS} } \qquad \times \trans{k_{n-1}}{j}  + \left.  \chainsumOne  \rew{k_{n-1}}{j} \times \trans{i}{k_1}\trans{k_1}{k_2} \ldots \trans{k_{n-1}}{j}\right) \\
		\addtocounter{cnt}{1}
			&\stackrel{(\alph{cnt})}{=} \sum_{k_{n-1}=1}^{\sizeS} \left(  \Big\{ \hatRsubij{n-1}{i}{k_{n-1}} \Big\} \trans{k_{n-1}}{j} +\chainsumOne  \rew{k_{n-1}}{j} \times \trans{i}{k_1}\trans{k_1}{k_2} \ldots \trans{k_{n-1}}{j} \right) \\
		\addtocounter{cnt}{1}
			&= \sum_{k_{n-1}=1}^{\sizeS} \left(   \hatRsubij{n-1}{i}{k_{n-1}}  \trans{k_{n-1}}{j} + \rew{k_{n-1}}{j} \trans{k_{n-1}}{j}\chainsumOne   \trans{i}{k_1}\trans{k_1}{k_2} \ldots \trans{k_{n-2}}{k_{n-1}} \right) \\
			&\stackrel{(\alph{cnt})}{=} \sum_{k_{n-1}=1}^{\sizeS}   \hatRsubij{n-1}{i}{k_{n-1}}  \trans{k_{n-1}}{j} + \rew{k_{n-1}}{j} \trans{k_{n-1}}{j}\transMsupij{n-1}{i}{k_{n-1}} \\
		\addtocounter{cnt}{1}
			&\stackrel{(\alph{cnt})}{=} \sum_{k_{n-1}=1}^{\sizeS}   \hatRsubij{n-1}{i}{k_{n-1}}  \trans{k_{n-1}}{j} + \HtransRewij{k_{n-1}}{j} \transMsupij{n-1}{i}{k_{n-1}} \\
		\addtocounter{cnt}{1}
			&\stackrel{(\alph{cnt})}{=}  [\hatRsub{n-1}  \transM]_{i,j} +  [\transMsup{n-1} \HtransRew ]_{i, j}
	\end{alignat}
\end{fleqn}
where 
\begin{enumerate}[(a)]
	\item follows from Definition~\ref{def:hatRij} and rearranging Lemma~\ref{lem:barRij_property}
	\item follows from Definition~\ref{def:hatRij} and the application of Lemma~\ref{lem:barRij_property} for $\barRsubij{n-1}{i}{k_{n-1}}$
	\item follows from Lemma~\ref{lem:transMsup} in Appendix~\ref{sec:matrix_properties}, where we have used the fact that $n \geq 3$
	\item follows from Definition~\ref{def:HtransRew}
	\item follows from the definition of matrix multiplication
\end{enumerate}

We mention that although we derived the lemma assuming $n \in \{3, 4, \ldots\}$, the lemma also holds if $n=2$.  We can see this by using  Corollary~\ref{cor:hatR1} to compare the right-hand-sides of \eqref{eq:hatR2} and \eqref{eq:lem_recurrence} when $n=2$.

\end{proof}

We next use the \emph{recurrence} relation for $\hatR$ in Lemma~\ref{lem:recurrence} to derive an \emph{explicit} expression for $\hatR$, the scaled transient accumulated reward at time step $n$.

\begin{myLemma}
	\label{lem:hatRn_matrix}
	Let $\hatR$ be defined as in Definition~\ref{def:hatR}.
	For $n = 1, 2, \ldots $
	\begin{equation}
	\label{eq:hatRn_matrix}
		\hatR = 
		\begin{bmatrix}
			\RewSubD & \RewSubC\\
			0 & 0 
		\end{bmatrix},
	\end{equation}	
	where
	\begin{align}
		\label{eq:RewSubDn}
			\RewSubD &= \sum_{i = 0}^{n-1}\transQ^{i}\HSubTransTrans\transQ^{n - i - 1}, \\
		\label{eq:RewSubCn}
			\RewSubC &= \fundMat(I - \transQ^{n})\HSubTransAbs + \fundMat(I - \transQ^{n-1})\HSubTransTrans\fundMat\transR - \sum_{i = 0}^{n-2}\transQ^{i}\HSubTransTrans\fundMat\transQ^{n - i - 1}\transR,
	\end{align}
	the matrices $\HSubTransTrans$ and $\HSubTransAbs$ are given in Remark~\ref{rem:HtransRew}, $\fundMat$ is the fundamental matrix of Lemma~\ref{lem:fund_mat} and the matrices $\transQ$ and $\transR$ are defined in~\eqref{eq:transM}.
\end{myLemma}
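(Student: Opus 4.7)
The plan is to prove \eqref{eq:hatRn_matrix} by induction on $n$, using the recurrence of Lemma~\ref{lem:recurrence} together with the block decomposition of $\transMsup{n}$ in Lemma~\ref{lem:transM_n}. The key structural fact that will keep the calculations tractable is that $\fundMat = (I-\transQ)^{-1} = \sum_{k=0}^{\infty}\transQ^{k}$, so $\fundMat$ commutes with $\transQ$ and satisfies $\fundMat(I-\transQ) = (I-\transQ)\fundMat = I$; equivalently $\fundMat\transQ^{m} = \fundMat - (I + \transQ + \cdots + \transQ^{m-1})$.

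For the base case $n=1$, Corollary~\ref{cor:hatR1} gives $\hatRsub{1} = \HtransRew$, whose block form \eqref{eq:HtransRew_sub} immediately matches \eqref{eq:hatRn_matrix} once one checks that $\RewSubDi{1} = \HSubTransTrans$ (the single-term sum) and $\RewSubCi{1} = \fundMat(I-\transQ)\HSubTransAbs + \fundMat(I-I)\HSubTransTrans\fundMat\transR - (\text{empty sum}) = \HSubTransAbs$, using $\fundMat(I-\transQ)=I$. For the inductive step, I assume the formula for $\hatRsub{n-1}$ and substitute into Lemma~\ref{lem:recurrence}. Multiplying out the block forms gives
\begin{equation*}
\RewSubD = \RewSubDi{n-1}\transQ + \transQ^{n-1}\HSubTransTrans, \qquad \RewSubC = \RewSubDi{n-1}\transR + \RewSubCi{n-1} + \transQ^{n-1}\HSubTransAbs.
\end{equation*}
The identity for $\RewSubD$ is essentially free: appending the factor of $\transQ$ to the inductive sum and adding $\transQ^{n-1}\HSubTransTrans$ reproduces \eqref{eq:RewSubDn}.

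The main work is verifying the formula for $\RewSubC$, and this is where I expect the main obstacle to lie, since three sums with differing index ranges and exponent patterns must be reconciled. I would process the expression one ``block'' at a time. First, the $\HSubTransAbs$-block: $\fundMat(I-\transQ^{n-1})\HSubTransAbs + \transQ^{n-1}\HSubTransAbs = \fundMat(I-\transQ^{n})\HSubTransAbs$, which follows from the single algebraic identity $\fundMat\transQ^{n} = \fundMat\transQ^{n-1} - \transQ^{n-1}$ (a consequence of $\fundMat\transQ = \fundMat - I$). Second, the $\HSubTransTrans\fundMat\transR$-block: the discrepancy between $\fundMat(I-\transQ^{n-2})$ and the target $\fundMat(I-\transQ^{n-1})$ evaluates to $\transQ^{n-2}\HSubTransTrans\fundMat\transR$ after applying $\fundMat(I-\transQ)=I$ and commutativity of $\fundMat$ and $\transQ$. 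Third, the two sums of the form $\sum \transQ^{i}\HSubTransTrans(\,\cdot\,)\transR$: I would rewrite the target sum $-\sum_{i=0}^{n-2}\transQ^{i}\HSubTransTrans\fundMat\transQ^{n-i-1}\transR$ and the inductive $-\sum_{i=0}^{n-3}\transQ^{i}\HSubTransTrans\fundMat\transQ^{n-i-2}\transR$ as a single telescoping combination, using $\fundMat(\transQ^{n-i-1} - \transQ^{n-i-2}) = -\transQ^{n-i-2}$, to convert the $\fundMat$-sums into ordinary $\transQ$-power sums that cancel precisely with $\RewSubDi{n-1}\transR = \sum_{i=0}^{n-2}\transQ^{i}\HSubTransTrans\transQ^{n-i-2}\transR$ and with the leftover $-\transQ^{n-2}\HSubTransTrans\fundMat\transR$ term from the preceding step. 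The careful bookkeeping of the boundary terms ($i=n-2$ in the target sum and $i=n-3$ in the inductive sum) is the only delicate part; once the telescoping is set up so that $\fundMat(I-\transQ)$ collapses the extra factor to the identity, everything cancels and \eqref{eq:RewSubCn} is recovered.
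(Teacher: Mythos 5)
Your proposal is correct and follows essentially the same route as the paper's proof: induction on $n$ via the recurrence of Lemma~\ref{lem:recurrence}, block multiplication yielding $\RewSubD = \RewSubDi{n-1}\transQ + \transQ^{n-1}\HSubTransTrans$ and $\RewSubC = \RewSubDi{n-1}\transR + \RewSubCi{n-1} + \transQ^{n-1}\HSubTransAbs$, and then algebra driven by $\fundMat(I-\transQ)=I$. The only cosmetic difference is bookkeeping: the paper inserts $\fundMat(I-\transQ)=I$ into $\RewSubDi{n-1}\transR$ and expands, whereas you reconcile the terms block by block, but the cancellations are the same.
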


\begin{proof}
	We proceed by induction.
	\begin{LaTeXdescription}
		\item[Base case] We use Corollary~\ref{cor:hatR1} and Remark~\ref{rem:HtransRew} to verify \eqref{eq:hatRn_matrix} for the base case after substituting $n = 1$ into~\eqref{eq:RewSubDn}  and~\eqref{eq:RewSubCn} to get that 
		\setcounter{cnt}{1}
		\begin{align}
			\RewSubDi{1} &= \HSubTransTrans, \\		
			\RewSubCi{1} &= \fundMat(I - \transQ)\HSubTransAbs \\
			&\stackrel{(\alph{cnt})}{=} \HSubTransAbs,
		\end{align} %
		where 
		\begin{enumerate}[(a)]
			\item follows from the definition of the fundamental matrix in Lemma~\ref{lem:fund_mat}.
		\end{enumerate}
		
		\item[Inductive Hypothesis] We assume that for $n - 1 \geq 1$, 
			\begin{align}
				\RewSubDi{n-1} &= \sum_{i = 0}^{n-2}\transQ^{i}\HSubTransTrans\transQ^{n - i - 2}, \\			
				\RewSubCi{n-1} &= \fundMat(I - \transQ^{n-1})\HSubTransAbs + \fundMat(I - \transQ^{n-2})\HSubTransTrans\fundMat\transR - \sum_{i = 0}^{n-3}\transQ^{i}\HSubTransTrans\fundMat\transQ^{n - i - 2}\transR.
			\end{align}
		
		\item[Induction Step] We begin with the recurrence relation in Lemma~\ref{lem:recurrence} from which we get 
			\setcounter{cnt}{1}
			\begin{align}
				\hatR &= \hatRsub{n-1} \transM + \transMsup{n-1}\HtransRew \\
				&\stackrel{(\alph{cnt})}{=} 	
					\begin{bmatrix}
						\RewSubDi{n-1} & \RewSubCi{n-1}\\
						0 & 0
					\end{bmatrix}
					P + \transMsup{n-1}\HtransRew \\
				\addtocounter{cnt}{1}
				&\stackrel{(\alph{cnt})}{=} 	
					\begin{bmatrix}
						\RewSubDi{n-1} & \RewSubCi{n-1}\\
						0 & 0
					\end{bmatrix}
					\begin{bmatrix}
						\transQ & \transR \\ 
						0 & I
					\end{bmatrix} + 
					\begin{bmatrix}
						\transQ^{n-1} & \sum_{i = 0}^{n-2} \transQ^{i}\transR \\ 
						0 & I
					\end{bmatrix} H\\
				\addtocounter{cnt}{1}
				&\stackrel{(\alph{cnt})}{=}
					\label{eq:before_block}
					\begin{bmatrix}
						\RewSubDi{n-1} & \RewSubCi{n-1}\\
						0 & 0
					\end{bmatrix}
					\begin{bmatrix}
						\transQ & \transR \\ 
						0 & I
					\end{bmatrix} + 
					\begin{bmatrix}
						\transQ^{n-1} & \sum_{i = 0}^{n-2} \transQ^{i}\transR \\ 
						0 & I
					\end{bmatrix}
					\begin{bmatrix}
						\HSubTransTrans & \HSubTransAbs\\ 
						0 & 0
					\end{bmatrix} 
				\addtocounter{cnt}{1}					
			\end{align}
			where 
			\begin{enumerate}[(a)]
				\item follows from the inductive hypothesis
				\item follows from Lemma~\ref{lem:transM_n}
				\item follows from Remark~\ref{rem:HtransRew}.
			\end{enumerate}

			We first consider $\RewSubDi{n}$.  We perform the block matrix multiplication in~\eqref{eq:before_block} to get that 
			\begin{align}
				\setcounter{cnt}{1}
				\RewSubDi{n} &=	\RewSubDi{n-1} \transQ + \transQ^{n-1}\HSubTransTrans \\
				&\stackrel{(\alph{cnt})}{=} \left( \sum_{i = 0}^{n-2}\transQ^{i}\HSubTransTrans\transQ^{n - i - 2} \right) \transQ +  \transQ^{n-1}\HSubTransTrans\\
				\addtocounter{cnt}{1}	
				&=  \sum_{i = 0}^{n-2}\transQ^{i}\HSubTransTrans\transQ^{n - i - 1}+  \transQ^{n-1}\HSubTransTrans\\
				\label{eq:RewSubDi_final}
				&=  \sum_{i = 0}^{n-1}\transQ^{i}\HSubTransTrans\transQ^{n - i - 1},
			\end{align}
			where 
			\begin{enumerate}[(a)]
				\item follows from the inductive hypothesis.
			\end{enumerate}
			We conclude that~\eqref{eq:RewSubDn} holds after comparing with~\eqref{eq:RewSubDi_final} in the induction step. Next, for $\RewSubCi{n}$, we again perform the block matrix multiplication in~\eqref{eq:before_block} to get that 
			\begin{align}
				\setcounter{cnt}{1}
				\RewSubCi{n} &= \RewSubDi{n-1} \transR +  \RewSubCi{n-1}  + \transQ^{n-1}\HSubTransAbs \\
				&\stackrel{(\alph{cnt})}{=} \left( \sum_{i = 0}^{n-2}\transQ^{i}\HSubTransTrans\transQ^{n - i - 2} \right) \transR + \left( \fundMat(I - \transQ^{n-1})\HSubTransAbs + \fundMat(I - \transQ^{n-2})\HSubTransTrans\fundMat\transR \vphantom{\sum_{i=0}^{n}} \right. \\ \nonumber
				&\qquad \left. - \sum_{i = 0}^{n-3}\transQ^{i}\HSubTransTrans\fundMat\transQ^{n - i - 2}\transR \right) + \transQ^{n-1}\HSubTransAbs \\
				\addtocounter{cnt}{1}				
				&\stackrel{(\alph{cnt})}{=}  \left( \sum_{i = 0}^{n-2}\transQ^{i}\HSubTransTrans \fundMat (I - \transQ)\transQ^{n - i - 2}  \transR  - \sum_{i = 0}^{n-3}\transQ^{i}\HSubTransTrans\fundMat\transQ^{n - i - 2}\transR \right) \\ \nonumber
				&\qquad +  \fundMat(I - \transQ^{n-1})\HSubTransAbs + \fundMat(I - \transQ^{n-2})\HSubTransTrans\fundMat\transR  + \transQ^{n-1}\HSubTransAbs \\
				\addtocounter{cnt}{1}				
				&=  \left( \sum_{i = 0}^{n-2} \left( \transQ^{i}\HSubTransTrans \fundMat \transQ^{n - i - 2}  \transR -  \transQ^{i}\HSubTransTrans \fundMat\transQ^{n - i - 1} \transR \right) - \sum_{i = 0}^{n-3}\transQ^{i}\HSubTransTrans\fundMat\transQ^{n - i - 2}\transR \right) \\ \nonumber 
				&\qquad +  \fundMat(I - \transQ^{n-1})\HSubTransAbs + \fundMat(I - \transQ^{n-2})\HSubTransTrans\fundMat\transR + \transQ^{n-1}\HSubTransAbs \\
				&=  \left(   \transQ^{n-2} \HSubTransTrans  \fundMat  \transR -  \sum_{i = 0}^{n-2} \transQ^{i}\HSubTransTrans \fundMat\transQ^{n - i - 1} \transR \right) \\ \nonumber
				&\qquad +  \fundMat(I - \transQ^{n-1})\HSubTransAbs + \fundMat(I - \transQ^{n-2})\HSubTransTrans\fundMat\transR + \transQ^{n-1}\HSubTransAbs \\
				&=   \fundMat(I - \transQ^{n-1} + \fundMat^{-1}\transQ^{n-1})\HSubTransAbs + \fundMat(I - \transQ^{n-2} + \fundMat^{-1}\transQ^{n-2})\HSubTransTrans\fundMat\transR  \\ \nonumber
				&\qquad -  \sum_{i = 0}^{n-2} \transQ^{i}\HSubTransTrans \fundMat\transQ^{n - i - 1} \transR   \\
				\label{eq:RewSubCn_final}
				&\stackrel{(\alph{cnt})}{=}\fundMat(I - \transQ^{n})\HSubTransAbs + \fundMat(I - \transQ^{n-1})\HSubTransTrans\fundMat\transR  -  \sum_{i = 0}^{n-2} \transQ^{i}\HSubTransTrans \fundMat\transQ^{n - i - 1} \transR,
			\end{align}
			where 
			\begin{enumerate}[(a)]
				\item follows from the inductive hypothesis
				\item and (c) follow from the definition of the fundamental matrix, i.e., $\fundMat^{-1} = (I - \transQ)$ in Lemma~\ref{lem:fund_mat}.
			\end{enumerate}
			We conclude the proof after comparing~\eqref{eq:RewSubCn} with~\eqref{eq:RewSubCn_final} in the induction step.
	\end{LaTeXdescription}
\end{proof}


Finally, after having found an explicit expression for $\hatR$ at time step $n$, we now use properties of absorbing Markov chains to derive an expression for the long-term scaled reward, $\hatRinf$.

\begin{theorem}
	\label{thm:hatR_inf}
	Let $\hatRinf$ be defined as in Definition~\ref{def:hatRinf}. Let $\RewSubDi{\infty} = \lim_{n \to \infty} \RewSubD$, and $\RewSubCi{\infty} = \lim_{n \to \infty} \RewSubC$ where $\RewSubD$ and $\RewSubC$ are given by~\eqref{eq:RewSubDn} and~\eqref{eq:RewSubCn} respectively.  Then $\RewSubDi{\infty} = 0$ and 
	\begin{equation}
	\label{eq:hatRnInf_matrix}
		\hatRinf = 
		\begin{bmatrix}
			0 & \RewSubCi{\infty} \\
			0 & 0 
		\end{bmatrix},
	\end{equation}	
	where
	\begin{align}
		\label{eq:RewSubCInf}
		\RewSubCi{\infty} &= \fundMat(\HSubTransAbs + \HSubTransTrans\fundMat\transR),
	\end{align}
	the matrices $\HSubTransTrans$ and $\HSubTransAbs$ are given in Remark~\ref{rem:HtransRew}, $\fundMat$ is the fundamental matrix of Lemma~\ref{lem:fund_mat} and $\transR$ is defined in~\eqref{eq:transM}.
\end{theorem}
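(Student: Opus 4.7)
The plan is to start from the explicit expressions for $\RewSubD$ and $\RewSubC$ established in Lemma~\ref{lem:hatRn_matrix} and take the limit $n \to \infty$ term by term. The block structure in~\eqref{eq:hatRnInf_matrix} carries over from~\eqref{eq:hatRn_matrix} automatically, so the real work is to evaluate the two limits $\RewSubDi{\infty}$ and $\RewSubCi{\infty}$. Three of the ingredients appearing in~\eqref{eq:RewSubDn}--\eqref{eq:RewSubCn} tend to obvious limits using only $\transQ^n \to 0$ from Lemma~\ref{lem:transQn_zero}: in particular, $\fundMat(I - \transQ^n)\HSubTransAbs \to \fundMat \HSubTransAbs$ and $\fundMat(I - \transQ^{n-1})\HSubTransTrans\fundMat\transR \to \fundMat \HSubTransTrans \fundMat \transR$. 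Combining these two limits gives exactly $\fundMat(\HSubTransAbs + \HSubTransTrans \fundMat \transR)$, which matches~\eqref{eq:RewSubCInf}.

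The main obstacle is showing that the two ``split'' sums
\[
	\sum_{i=0}^{n-1} \transQ^{i}\HSubTransTrans\transQ^{n-i-1} \quad \text{and} \quad \sum_{i=0}^{n-2} \transQ^{i}\HSubTransTrans\fundMat\transQ^{n-i-1}\transR
\]
each vanish as $n \to \infty$. The difficulty is that $\transQ^n \to 0$ alone is not enough, since for each fixed $i$ the factor $\transQ^i$ stays bounded while $n-i-1$ ranges; we need uniform control over all $i$. The plan is to invoke Corollary~\ref{cor:transQ_norm}, which guarantees a sub-multiplicative matrix norm $\norm{\cdot}$ with $\norm{\transQ} < 1$. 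Then using sub-multiplicativity, for the first sum I would write
\[
	\norm{\RewSubD} \le \sum_{i=0}^{n-1} \norm{\transQ}^{i}\, \norm{\HSubTransTrans}\, \norm{\transQ}^{n-i-1} = n\, \norm{\HSubTransTrans}\, \norm{\transQ}^{n-1},
\]
and an analogous bound for the second sum picking up the extra constant factor $\norm{\fundMat}\norm{\transR}$. Since $\norm{\transQ} < 1$, the quantity $n \norm{\transQ}^{n-1}$ tends to zero, so both sums vanish in this norm and hence entry-wise (all matrix norms are equivalent on finite-dimensional spaces).

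Putting the pieces together: $\RewSubDi{\infty} = 0$ from the first bound, and $\RewSubCi{\infty} = \fundMat \HSubTransAbs + \fundMat \HSubTransTrans \fundMat \transR = \fundMat(\HSubTransAbs + \HSubTransTrans \fundMat \transR)$, which is precisely~\eqref{eq:RewSubCInf}. Substituting into the block form of Lemma~\ref{lem:hatRn_matrix} yields~\eqref{eq:hatRnInf_matrix} and completes the proof. The entire argument is routine once the norm trick from Corollary~\ref{cor:transQ_norm} is in hand; I expect no further subtleties beyond confirming sub-multiplicativity of the chosen norm, which is standard.
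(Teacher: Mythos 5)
Your proposal is correct and follows essentially the same route as the paper: take limits in the expressions of Lemma~\ref{lem:hatRn_matrix}, kill the first two terms' corrections via $\transQ^n \to 0$ (Lemma~\ref{lem:transQn_zero}), and dispose of the split sums with the norm bound $n\norm{\transQ}^{n-1}\norm{\HSubTransTrans} \to 0$ from Corollary~\ref{cor:transQ_norm}. The only difference is cosmetic: the paper outsources your norm estimate to Lemma~\ref{lem:summation_matrix_zero} in the appendix rather than inlining it, and peels off the $i = n-1$ term of $\RewSubD$ separately before applying that lemma.
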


\begin{proof}	
	We begin by writing
	\setcounter{cnt}{1}
	\begin{align}
		\RewSubCi{\infty} &\stackrel{(\alph{cnt})}{=} \lim_{n \to \infty} \left( \fundMat(I - \transQ^{n})\HSubTransAbs + \fundMat(I - \transQ^{n-1})\HSubTransTrans\fundMat\transR - \sum_{i = 0}^{n-2}\transQ^{i}\HSubTransTrans\fundMat\transQ^{n - i - 1}\transR\right) \\
		\addtocounter{cnt}{1}
		\label{eq:RewSubCInf_summation}
		&\stackrel{(\alph{cnt})}{=}  \fundMat\HSubTransAbs + \fundMat\HSubTransTrans\fundMat\transR - \lim_{n \to \infty}  \sum_{i = 0}^{n-2}\transQ^{i}\HSubTransTrans\fundMat\transQ^{n - i - 1}\transR,
		\addtocounter{cnt}{1}
	\end{align}
	where 
	\begin{enumerate}[(a)]
		\item follows from Lemma~\ref{lem:hatRn_matrix}
		\item follows from Lemma~\ref{lem:transQn_zero}.
	\end{enumerate}
	In order to prove~\eqref{eq:RewSubCInf}, we must now show the last term in~\eqref{eq:RewSubCInf_summation} converges to the zero matrix.  We use Corollary~\ref{cor:transQ_norm} and Lemma~\ref{lem:summation_matrix_zero} in Appendix~\ref{sec:matrix_properties} to conclude that this is indeed the case.  
	
	For $\RewSubDi{\infty}$, we have that 
	\begin{align}
		\RewSubDi{\infty} &= \lim_{n \to \infty} \sum_{i = 0}^{n-1}\transQ^{i}\HSubTransTrans\transQ^{n - i - 1} \\
		\label{eq:RewSubDi_inf_proof}
		&= \lim_{n \to \infty} \left( \transQ^{n-1} \HSubTransTrans + \sum_{i = 0}^{n-2}\transQ^{i}\HSubTransTrans\transQ^{n - i - 1} \right)
		\addtocounter{cnt}{1}.
	\end{align}
	We conclude from Lemma~\ref{lem:transQn_zero} that the first term in~\eqref{eq:RewSubDi_inf_proof} converges to the zero matrix.  Finally we again use Corollary~\ref{cor:transQ_norm} and Lemma~\ref{lem:summation_matrix_zero} in Appendix~\ref{sec:matrix_properties} to conclude that the second term in~\eqref{eq:RewSubDi_inf_proof} also converges to the zero matrix so that $\RewSubDi{\infty} = 0$.
\end{proof}

\section{Expected Unscaled Rewards}
\label{subsec:unscaled_rewards}





Having found an expression for $\hatR$ in the previous section, we now show that given initial state $\Si{0} = i$, the expected accumulated reward at time $n$ is given by the sum over all columns of the $i$th row of the matrix $\hatR$.

\begin{theorem}
\label{thm:barRi}
	Let $\barRi{i}$ be defined as in Definition~\ref{def:barRi}.  Then
\begin{align}
	\barRi{i} &=  \sum_{j = 1}^{|\myState|} \hatRij{i}{j},
\end{align}
where $\hatRij{i}{j}$ is defined as in Definition~\ref{def:hatRij} and is given by Lemma~\ref{lem:hatRn_matrix}.
\end{theorem}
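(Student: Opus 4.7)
The proof will be a direct one-line application of the law of total expectation, conditioning on the value of the state at time $n$. The plan is to start from the definition $\barRi{i} = \mathbb{E}(\Rn \mid \Si{0} = i)$ (Definition~\ref{def:barRi}) and partition the event space according to the $|\myState|$ possible values of $\Si{n}$.

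Specifically, I will write
\begin{align}
    \barRi{i} &= \mathbb{E}(\Rn \mid \Si{0} = i) \\
    &= \sum_{j=1}^{|\myState|} \mathbb{E}(\Rn \mid \Si{0} = i, \Si{n} = j)\, \Prob(\Si{n} = j \mid \Si{0} = i) \\
    &= \sum_{j=1}^{|\myState|} \barRij{i}{j}\, \Prob(\Si{n} = j \mid \Si{0} = i) \\
    &= \sum_{j=1}^{|\myState|} \hatRij{i}{j},
\end{align}
where the second equality is the tower property, the third invokes Definition~\ref{def:barRij}, and the fourth invokes Definition~\ref{def:hatRij}.

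There is no real obstacle here: each line is justified by a single definition or a standard probability identity. The only subtlety worth flagging in the write-up is that the summation over $j$ is well-defined because $\{\Si{n} = j\}_{j \in \myState}$ partitions the sample space, and terms for which $\Prob(\Si{n} = j \mid \Si{0} = i) = 0$ contribute zero to both sides (so the conditional expectation $\barRij{i}{j}$, even if undefined, is multiplied by zero and the identity $\hatRij{i}{j} = \barRij{i}{j}\,\Prob(\Si{n} = j \mid \Si{0} = i)$ still yields $0$ by the convention used in Definition~\ref{def:hatRij}). Given that Lemma~\ref{lem:hatRn_matrix} already supplies an explicit matrix form for $\hatR$, this theorem's value is less in the proof than in packaging the transient expected accumulated reward from initial state $i$ as the $i$th row sum of the computable matrix $\hatR$.
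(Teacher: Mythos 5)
Your proposal is correct and follows essentially the same route as the paper: apply the law of total expectation conditioning on $\Si{n}$, then invoke Definition~\ref{def:barRij} and Definition~\ref{def:hatRij}. The extra remark about zero-probability terms is a harmless refinement the paper omits.
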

\begin{proof}
	By the law of total expectation we have that 
	\setcounter{cnt}{1}
	\begin{align}
		\barRi{i} &= \sum_{j = 1}^{|\myState|} \barRij{i}{j} \Prob(\Si{n} = j | \Si{0} = i).
	\end{align}
	We conclude the result in Theorem~\ref{thm:barRi} holds by Definition~\ref{def:hatRij}.
\end{proof}

\begin{mycor}
\label{cor:barRinfi}
	Let $\barRinf(i)$ be defined as in Definition~\ref{def:barRinfi}.  Then
\begin{align}
	\barRinf(i) &=  \sum_{j = 1}^{|\myState|} \hatRinf(i, j),
\end{align}
where $\hatRinf(i, j)$ is defined as in Definition~\ref{def:hatRinfij} and is given by Theorem~\ref{thm:hatR_inf}.
\end{mycor}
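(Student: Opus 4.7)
The plan is to obtain Corollary~\ref{cor:barRinfi} as an essentially immediate consequence of Theorem~\ref{thm:barRi} by passing to the limit $n \to \infty$. Specifically, I would start from the identity
\begin{equation*}
    \barRi{i} = \sum_{j=1}^{|\myState|} \hatRij{i}{j},
\end{equation*}
which is the content of Theorem~\ref{thm:barRi}, and apply $\lim_{n \to \infty}$ to both sides.

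On the left-hand side, Definition~\ref{def:barRinfi} directly gives $\lim_{n \to \infty} \barRi{i} = \barRinf(i)$. On the right-hand side, since the sum over $j$ is a \emph{finite} sum indexed by the state space $\myState$ (which is finite by assumption, with $|\myState| = \sizeS$), the limit commutes with the summation. Applying Definition~\ref{def:hatRinfij} to each summand yields $\lim_{n \to \infty} \hatRij{i}{j} = \hatRinf(i,j)$, giving the desired equality.

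The only step that requires any justification at all is the interchange of limit and sum, and because the sum has only $|\myState|$ terms, this is entirely routine and requires no dominated-convergence-style argument. The existence of each individual limit $\hatRinf(i,j)$ is guaranteed by Theorem~\ref{thm:hatR_inf}, which provides the explicit closed-form expression in~\eqref{eq:hatRnInf_matrix} and \eqref{eq:RewSubCInf}. Thus there is really no main obstacle to overcome; the corollary is a direct limiting version of Theorem~\ref{thm:barRi}, and the proof should be no more than two or three lines.
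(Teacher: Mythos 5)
Your proposal is correct and matches the paper's (implicit) argument exactly: the paper states Corollary~\ref{cor:barRinfi} without proof as an immediate consequence of taking $n \to \infty$ in Theorem~\ref{thm:barRi}, exchanging the limit with the finite sum over $j \in \myState$, and invoking Definition~\ref{def:hatRinfij} together with the existence of the limits guaranteed by Theorem~\ref{thm:hatR_inf}. Nothing further is needed.
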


Similarly, given $\barRi{i}$ and a prior distribution over initial states, we can use the law of total expectation to calculate the unconditional expected value of $\Rn$. 

\begin{theorem}
\label{thm:barR_prior}
	Let $\Rn$ be defined as in Definition~\ref{def:Rn}. If a prior distribution $\Prob(\Si{0})$ over the initial state $\Si{0}$ is known, then $\mathbb{E}\Rn$, the expected value of $\Rn$ is given by
	\begin{equation}
		\mathbb{E}\Rn = \sum_{i = 1}^{|\myState|} \barRi{i} \Prob(\Si{0} = i),
	\end{equation}
where $\barRi{i}$ is defined as in Definition~\ref{def:barRi} and is given by Theorem~\ref{thm:barRi}.
\end{theorem}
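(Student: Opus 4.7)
The plan is to invoke the law of total expectation, conditioning on the initial state $\Si{0}$, and then recognize the conditional expectation as $\barRi{i}$ by Definition~\ref{def:barRi}. This is essentially a one-line argument, so the proof will be a short unrolling rather than involving any real obstacle.

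More concretely, I would first write
\begin{equation*}
    \mathbb{E}\Rn = \sum_{i = 1}^{|\myState|} \mathbb{E}(\Rn \mid \Si{0} = i)\, \Prob(\Si{0} = i),
\end{equation*}
which is valid because $\{\Si{0} = i\}_{i \in \myState}$ partitions the sample space and the prior distribution $\Prob(\Si{0})$ is given. Then I would identify $\mathbb{E}(\Rn \mid \Si{0} = i)$ as exactly $\barRi{i}$ by Definition~\ref{def:barRi}. Substituting yields the claimed expression.

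The only subtlety worth pointing out is that $\barRi{i}$ is already guaranteed to be finite and computable via Theorem~\ref{thm:barRi} (which in turn rests on Lemma~\ref{lem:hatRn_matrix}), so the conditional expectations on the right-hand side are well defined. No new machinery, no convergence argument, and no matrix manipulation are required beyond what has already been established; the proof is a direct consequence of the tower property together with the definitions already in place.
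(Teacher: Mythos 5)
Your proposal is correct and matches the paper's approach exactly: the paper states this result immediately after remarking that it follows from the law of total expectation applied to the prior over $\Si{0}$, which is precisely the one-line tower-property argument you give. No gaps; your additional note that $\barRi{i}$ is well defined via Theorem~\ref{thm:barRi} is a harmless bonus.
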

\begin{mycor}
	Let $\barRinf = \lim_{n \to \infty} \mathbb{E}\Rn$.  Then 
\label{cor:barRinf}
\begin{align}
	\barRinf &=  \sum_{i= 1}^{|\myState|} \barRinf(i)\Prob(\Si{0} = i),
\end{align}
where $\barRinf(i)$ is defined as in Definition~\ref{def:barRinfi} and is given by Corollary~\ref{cor:barRinfi}.
\end{mycor}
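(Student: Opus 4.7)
The plan is to obtain the result directly from Theorem~\ref{thm:barR_prior} by taking $n \to \infty$ on both sides of the equality $\mathbb{E}\Rn = \sum_{i=1}^{|\myState|} \barRi{i} \Prob(\Si{0} = i)$. Since $\Prob(\Si{0} = i)$ does not depend on $n$, the only $n$-dependence sits inside $\barRi{i}$, so I would push the limit through the (finite) sum and invoke Definition~\ref{def:barRinfi} on each summand to recognise $\lim_{n \to \infty} \barRi{i} = \barRinf(i)$.

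More explicitly, I would write
\begin{align}
	\barRinf &= \lim_{n \to \infty} \mathbb{E}\Rn \\
	&= \lim_{n \to \infty} \sum_{i=1}^{|\myState|} \barRi{i} \Prob(\Si{0} = i) \\
	&= \sum_{i=1}^{|\myState|} \left( \lim_{n \to \infty} \barRi{i} \right) \Prob(\Si{0} = i) \\
	&= \sum_{i=1}^{|\myState|} \barRinf(i) \Prob(\Si{0} = i),
\end{align}
citing Theorem~\ref{thm:barR_prior} for the second equality, linearity of limits over the finite index set $\{1, \ldots, |\myState|\}$ for the third, and Definition~\ref{def:barRinfi} for the fourth.

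The only non-routine point worth flagging is the justification that each limit $\lim_{n \to \infty} \barRi{i}$ actually exists in $\mathbb{R}$, so that swapping limit and sum is unproblematic. This is not an obstacle, however, because the existence of $\barRinf(i)$ has already been established via Corollary~\ref{cor:barRinfi} together with Theorem~\ref{thm:hatR_inf}: the long-term scaled reward $\hatRinf(i,j)$ is finite and given in closed form by $\RewSubCi{\infty} = \fundMat(\HSubTransAbs + \HSubTransTrans\fundMat\transR)$, and summing finitely many such entries preserves finiteness. With the interchange justified, the statement follows in a single line, so no further structural work is needed.
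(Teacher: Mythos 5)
Your proposal is correct and follows exactly the route the paper intends: the corollary is stated as an immediate consequence of Theorem~\ref{thm:barR_prior}, obtained by letting $n \to \infty$ and exchanging the limit with the finite sum over initial states, with each limit $\lim_{n\to\infty}\barRi{i} = \barRinf(i)$ existing by Corollary~\ref{cor:barRinfi} and Theorem~\ref{thm:hatR_inf}. Your explicit remark justifying the interchange is a welcome (if minor) addition to what the paper leaves implicit.
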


Finally, it may be of interest to know the expected accumulated reward after absorption given initial state~$i$ and absorbing state~$j$.

\begin{theorem}
\label{thm:barRinf_absorb}
	Let $i \in \StateTrans$ and $j \in \StateAbs$.  Let $\barRinf(i, j)$ represent the expected accumulated reward after absorption given initial state~$i$ and absorbing state~$j$.  Then 
	\begin{align}
		\barRinf(i, j) &=  \frac{1}{\transM^{\infty}(i, j)}\hatRinf(i, j), 
	\end{align}
	where $\transM^{\infty}(i, j)$ is the $(i, j)th$ entry of $\transM^{\infty}$ given in Lemma~\ref{lem:transM_infty}.
\end{theorem}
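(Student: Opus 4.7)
\textbf{Plan for Theorem \ref{thm:barRinf_absorb}.} The idea is to take $n \to \infty$ in the factored identity $\hatRij{i}{j} = \barRij{i}{j} \cdot \Prob(\Si{n} = j \mid \Si{0} = i)$ that follows directly from Definition \ref{def:hatRij}. Since $\hatRinf(i, j)$ and $\transM^{\infty}(i, j)$ are already defined as the $n \to \infty$ limits of the two factors on the right-hand side (via Definition \ref{def:hatRinfij} and Lemma \ref{lem:transM_infty}), the theorem reduces to establishing
\begin{equation*}
\lim_{n \to \infty} \barRij{i}{j} = \barRinf(i, j),
\end{equation*}
and then applying the product rule for limits, which is valid when $\transM^{\infty}(i, j) > 0$ (the case implicit in the statement, since otherwise the conditional expectation defining $\barRinf(i, j)$ is not well posed).

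\textbf{Key steps.} First, I would exploit the structure of absorbing chains: with $i \in \StateTrans$ and $j \in \StateAbs$, the event $\{\Si{n} = j\}$ given $\Si{0} = i$ coincides with the event that the chain has been absorbed at $j$ at some time $\le n$, and from Remark \ref{rem:Rew} the impulse rewards along absorbing-to-absorbing self-transitions are zero, so the accumulated reward $\Rn$ is constant on this event once absorption has occurred. Second, I would identify $\barRinf(i, j)$ with the conditional expectation of the total accumulated reward along sample paths starting at $i$ and eventually absorbed at $j$, and pass to the limit $\barRij{i}{j} \to \barRinf(i, j)$ by dominated convergence. A dominating random variable is the total absolute reward to absorption, whose expectation is finite because Corollary \ref{cor:transQ_norm} supplies a norm with $\norm{\transQ} < 1$, giving finite expected visit counts to every transient state which, combined with bounded entries of $\HtransRew$, bounds the expected total absolute reward. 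Third, I would combine the limits to obtain $\hatRinf(i, j) = \barRinf(i, j)\, \transM^{\infty}(i, j)$ and divide by $\transM^{\infty}(i, j)$.

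\textbf{Main obstacle.} The only delicate point is justifying the exchange of limit and conditional expectation in the second step, which boils down to integrability of the total reward accumulated before absorption. This can be established cleanly from Corollary \ref{cor:transQ_norm}: the finite expected number of visits to each transient state, together with bounded entries of $\HtransRew$, yields a summable bound on the expected absolute reward, and the Markov chain's evolution after hitting $j$ contributes nothing further by Remark \ref{rem:Rew}. All subsequent manipulations are purely algebraic and follow at once from the definitions.
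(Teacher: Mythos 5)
Your proposal is correct, and it is worth noting that the paper itself states Theorem~\ref{thm:barRinf_absorb} \emph{without any proof}: the result is evidently intended to follow immediately from Definition~\ref{def:hatRij}, i.e.\ from letting $n \to \infty$ in $\hatRij{i}{j} = \barRij{i}{j}\,\Prob(\Si{n}=j \mid \Si{0}=i)$ and reading $\barRinf(i,j)$ as the limit of $\barRij{i}{j}$. Your write-up supplies precisely the justification the paper leaves implicit: that for $i \in \StateTrans$ and $j \in \StateAbs$ the event $\{\Si{n}=j\}$ is the event of absorption at $j$ by time $n$, that $\Rn$ stabilizes after absorption by Remark~\ref{rem:Rew}, that the passage to the limit in the conditional expectation is legitimate by dominated convergence (with integrability of the total reward coming from $\norm{\transQ}<1$ via Corollary~\ref{cor:transQ_norm}), and that the division requires $\transM^{\infty}(i,j)>0$, a hypothesis the theorem statement does not make explicit but which is needed for $\barRinf(i,j)$ to be well defined. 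No gaps; your argument is a strictly more careful version of the paper's (absent) one.
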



\appendices

%
%
%
%

\section{Proof of Supporting Lemmas}
\label{sec:matrix_properties}

\begin{myLemma}
	\label{lem:transMsup}
	Let $A$ be an $m \times m$ matrix, and let $A^n$ denote the $n$th power of $A$  for some $n \in \{2, 3, \ldots \}$.  Then the $(i, j)$th entry of $A^n$, denoted by $A^n(i, j)$, is given by 
	\begin{equation}
	\label{eq:transMsup}
		A^n(i, j) = \sum_{k_1 = 1}^m \sum_{k_2 = 1}^m \ldots \sum_{k_{n-1} = 1}^m A(i, k_1)A(k_1, k_2) \ldots A(k_{n-1}, j).
	\end{equation}
\end{myLemma}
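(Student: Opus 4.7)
The plan is to prove the lemma by induction on $n$, using the standard definition of matrix multiplication at each step. The statement is essentially a combinatorial unrolling of repeated matrix multiplication, so I expect no substantive obstacle; the work is bookkeeping on indices.

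For the base case $n = 2$, I would simply invoke the definition of matrix multiplication, which yields $A^2(i, j) = \sum_{k_1 = 1}^{m} A(i, k_1) A(k_1, j)$, matching the right-hand side of~\eqref{eq:transMsup} with a single summation index. This verifies the claim for the smallest admissible value of $n$.

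For the inductive step, assume the formula holds for some $n - 1 \geq 2$, i.e.,
\begin{equation}
A^{n-1}(i, k_{n-1}) = \sum_{k_1 = 1}^{m} \sum_{k_2 = 1}^{m} \ldots \sum_{k_{n-2} = 1}^{m} A(i, k_1) A(k_1, k_2) \ldots A(k_{n-2}, k_{n-1}).
\end{equation}
Writing $A^n = A^{n-1} A$ and applying the definition of matrix multiplication gives $A^n(i, j) = \sum_{k_{n-1} = 1}^{m} A^{n-1}(i, k_{n-1}) A(k_{n-1}, j)$. Substituting the inductive hypothesis into this expression and pulling the outer sum past the product of the already-summed factors yields the claimed $(n-1)$-fold nested summation, completing the induction.

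The only subtlety I anticipate is notational: one must be careful to introduce the new summation index $k_{n-1}$ on the outside when applying $A^n = A^{n-1} A$, so that the order of the indices matches the chain $i \to k_1 \to k_2 \to \ldots \to k_{n-1} \to j$ appearing in~\eqref{eq:transMsup}. No deeper machinery is needed.
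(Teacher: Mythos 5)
Your proposal is correct and follows essentially the same route as the paper: induction on $n$ with base case $n=2$ given by the definition of matrix multiplication, followed by writing $A^n = A^{n-1}A$ and substituting the inductive hypothesis. No differences worth noting.
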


\begin{proof}
	We proceed by induction.
	\begin{LaTeXdescription}
		\item[Base case] We substitute $n = 2$ into \eqref{eq:transMsup} to get that 
			\begin{equation}
				A^2(i, j) = \sum_{k_1 = 1}^m A(i, k_1)A(k_1, j),
			\end{equation}
			which is the familiar definition for matrix multiplication.
		\item[Inductive Hypothesis] We assume that for $n - 1 \geq 2$, 
			\begin{equation}
				A^{n-1}(i, j) = \sum_{k_1 = 1}^m \sum_{k_2 = 1}^m \ldots \sum_{k_{n-2} = 1}^m A(i, k_1)A(k_1, k_2) \ldots A(k_{n-2}, j).
			\end{equation}
		\item[Induction Step] We have that
			\setcounter{cnt}{1}
			\begin{align}
				A^{n}(i, j) &\stackrel{(\alph{cnt})}{=} \sum_{k_{n-1} = 1}^m A^{n-1}(i, k_{n-1}) A(k_{n-1}, j) \\
				\addtocounter{cnt}{1}		
				\label{eq:transMsup_final}
				&\stackrel{(\alph{cnt})}{=} \sum_{k_{n-1} = 1}^m \left\{\sum_{k_1 = 1}^m \sum_{k_2 = 1}^m \ldots \sum_{k_{n-2} = 1}^m A(i, k_1)A(k_1, k_2) \ldots A(k_{n-2}, k_{n-1}) \right\} A(k_{n-1}, j),
			\end{align}
			where 
			\begin{enumerate}[(a)]
				\item follows from the definition of matrix multiplication and the fact that $A^n = A^{n-1} A$
				\item follows from the inductive hypothesis.
			\end{enumerate}
			After rearranging the right-hand-side of~\eqref{eq:transMsup_final}, we conclude that~\eqref{eq:transMsup} indeed holds.
	\end{LaTeXdescription}
\end{proof}


\begin{myLemma}
	\label{lem:summation_matrix_zero}
	Let $g:  \mathbb{N} \times \mathbb{R}^{m \times m} \times \mathbb{R}^{m \times m} \mapsto \mathbb{R}^{m \times m}$ be the function given by 
	\begin{equation}
	g(n, A, Q) = \sum_{i = 0}^{n-2}\transQ^{i}A\transQ^{n - i - 1}.
	\end{equation}
	where $\transQ$ is a matrix such that there exists a norm for which $\norm{Q} < 1$. Then 
	\begin{equation}
	\lim_{n \to \infty} g(n, A, Q) = 0_{m \times m}.
	\end{equation}
\end{myLemma}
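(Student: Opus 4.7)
The plan is to bound $\norm{g(n, A, Q)}$ using a sub-multiplicative matrix norm and then let $n \to \infty$. By hypothesis, we may choose a matrix norm $\norm{\cdot}$ for which $\norm{Q} < 1$; call this quantity $c$, so $0 \leq c < 1$. Since every matrix norm on $\mathbb{R}^{m \times m}$ is equivalent to every other, convergence to zero in this chosen norm is equivalent to entry-wise convergence to the zero matrix, which is what we ultimately need.

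First I would apply the triangle inequality and sub-multiplicativity of $\norm{\cdot}$ to the defining sum, yielding
\begin{equation}
\norm{g(n, A, Q)} \;\leq\; \sum_{i=0}^{n-2} \norm{Q^i} \, \norm{A} \, \norm{Q^{n-i-1}} \;\leq\; \norm{A} \sum_{i=0}^{n-2} c^{\,i} \cdot c^{\,n-i-1} \;=\; (n-1)\, \norm{A}\, c^{\,n-1}.
\end{equation}
The key point is that every term in the sum collapses to the same power $c^{n-1}$, independent of the summation index $i$, so the bound reduces to the elementary quantity $(n-1)\norm{A}c^{n-1}$.

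Next I would observe that $(n-1)c^{n-1} \to 0$ as $n \to \infty$ whenever $0 \leq c < 1$; this is a standard fact (e.g.\ by the ratio test or by noting $\sum_{n} n c^{n-1}$ converges). Hence $\norm{g(n, A, Q)} \to 0$, and by the equivalence of norms on the finite-dimensional space $\mathbb{R}^{m \times m}$ we conclude that $g(n, A, Q) \to 0_{m \times m}$ entry-wise.

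There is no substantive obstacle here: the only subtlety is that the hypothesis only guarantees the existence of \emph{some} norm with $\norm{Q} < 1$ (supplied by Corollary~\ref{cor:transQ_norm}), not that every norm satisfies this. The resolution is exactly the equivalence-of-norms argument above, which transfers convergence in the chosen norm to convergence in, say, the max-entry norm, and therefore to convergence to the zero matrix.
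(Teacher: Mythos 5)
Your proposal is correct and follows essentially the same route as the paper: triangle inequality plus sub-multiplicativity collapses every summand to $\norm{Q}^{n-1}\norm{A}$, giving a bound of the form $(\text{linear in } n)\cdot \norm{Q}^{n-1}\norm{A} \to 0$ since $\norm{Q}<1$ (the paper invokes L'Hospital where you cite the ratio test, and it writes the prefactor as $n-2$ where your count of $n-1$ terms is actually the correct one, though neither affects the limit). Your explicit appeal to equivalence of norms on $\mathbb{R}^{m\times m}$ to pass from norm convergence to entry-wise convergence is a detail the paper leaves implicit, but it does not change the argument.
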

\begin{proof}
	We show that the norm of $g(n, A, Q)$ approaches zero as $n \to \infty$.  For any $n$, we have that
	\setcounter{cnt}{1}
	\begin{align}
		\norm{g(n, A, Q)} &= \norm{\sum_{i = 0}^{n-2} \transQ^{i}A\transQ^{n - i - 1}}\\
		&\stackrel{(\alph{cnt})}{\leq} \sum_{i = 0}^{n-2} \norm{\transQ^{i} A \transQ^{n - i - 1}} \\
		\addtocounter{cnt}{1}
		&\stackrel{(\alph{cnt})}{\leq}  \sum_{i = 0}^{n-2} \norm{\transQ}^{i} \cdot \norm{A} \cdot \norm{\transQ}^{n - i - 1} \\
		\addtocounter{cnt}{1}
		&= \sum_{i = 0}^{n-2} \norm{\transQ}^{n - 1} \cdot \norm{A} \\
		&= (n-2) \norm{\transQ}^{n - 1} \cdot \norm{A},
		\label{eq:lHospital}
	\end{align}
	where 
	\begin{enumerate}[(a)]
		\item follows from sub-additive property of the matrix norm
		\item follows from sub-multiplicative property of the matrix norm.
	\end{enumerate}
	Finally, we use L'Hospital's Rule and the assumption that $\norm{Q} < 1$ to conclude that the right-hand-side of~\eqref{eq:lHospital} approaches zero as $n \to \infty$.
\end{proof}

\ifCLASSOPTIONcaptionsoff
  \newpage
\fi



%
%
%
%

\vspace{-1em}

\bibliographystyle{IEEEtran}
\bibliography{IEEEabrv,itw2013emina2}

\vfill


\end{document}